\newcommand{\banacha}{\mathbb X}
\newcommand{\banachb}{\mathbb Y}
\newtheorem{theorem}{Theorem}
\newtheorem{lemma}[theorem]{Lemma}
\newtheorem{corollary}[theorem]{Corollary}
\newtheorem{proposition}[theorem]{Proposition}
\newtheorem{remark}{Remark}
\newtheorem{example}{Example}
\begin{document}
\title{Local convergence of the Gauss-Newton method for injective-overdetermined systems of equations under a majorant condition}

\author{ M.L.N. Gon\c calves \thanks{IME/UFG, Campus II- Caixa
    Postal 131, CEP 74001-970 - Goi\^ania, GO, Brazil (E-mail:{\tt
      maxlng@mat.ufg.br}).  The author was partly supported by
    CNPq Grant 473756/2009-9 and CAPES.} }
 \maketitle
\begin{abstract}

A local convergence analysis of the Gauss-Newton method for
solving  injective-overdetermined  systems of nonlinear equations
under a majorant condition is provided. The convergence  as well as results on
its rate are established  without a convexity hypothesis on the
derivative of the majorant function. The optimal convergence
radius,  the biggest range for  uniqueness of the solution along
with  some other special cases are also obtained.

\end{abstract}

\noindent {{\bf Keywords:} Injective-overdetermined systems of equations;
Gauss-Newton method; Majorant condition; Local convergence.}

\maketitle
\section{Introduction}\label{sec:int}
Let $\banacha$ and $\banachb$ be real or complex Hilbert spaces. Let $\Omega\subseteq\banacha$ be an open set, and
 $F:\Omega\to \banachb$ a continuously differentiable nonlinear function. Consider the {\it systems of nonlinear equations }
\begin{equation}\label{eq:11}
F(x)=0.
\end{equation}
If $F'(x)$ is invertible, the Newton method and its variants (see
\cite{F08,F10,MAX1,FS06}) are the most efficient methods known for
solving such systems. However, $F'(x)$ may not even  be  a square
matrix. One simple example arises when $\banacha=\mathbb{R}^n$ and
$\banachb=\mathbb{R}^m$, with $n\neq m$. In this case, $F'(x)$ is
not  invertible and \eqref{eq:11} becomes an overdetermined system
$(n<m)$ or an underdetermined system $(n>m)$. In general, if $F'(x)$
is injective or surjective, we say \eqref{eq:11} is an
injective-overdetermined  or surjective-underdetermined system  of
equations, respectively.

If $F '(x)$ is not necessarily invertible, a generalized Newton
method called the Gauss-Newton method  can be used (see
\cite{MAX2,MAX3}). It is  defined by
$$
x_{k+1}={x_k}- F'(x_k)^{\dagger}F(x_k),
\qquad k=0,1,\ldots,
$$
where $F'(x_k)^{\dagger}$ denotes the Moore-Penrose inverse of the
linear operator $F'(x_k)$. This algorithm finds least-squares
solutions of \eqref{eq:11}. These least-squares solutions, which may
or may not be solutions of the original problem \eqref{eq:11}, are
related to the nonlinear least squares problem
$$
\min_{x\in \Omega } \;\|F(x)\|^2,
$$
that is, they are stationary points of $H(x)=\|F(x)\|^2$. This paper
is focused on the case in which the least-squares solutions of
\eqref{eq:11}   also solve \eqref{eq:11}. In the theory of nonlinear
least squares problems, this case is called the zero-residual case.

Regarding the local and semi-local convergence analysis of the Newton
and Gauss-Newton methods, in the last years there has been
 much work  attempting to alleviate  the assumption of Lipschitz continuity on the operator $F'$,
 see for example \cite{AAA,C11,F08,F10,MAX1,MAX2,MAX3,FS06,huangy2004,LZJ,S86,XW10,XW9,10100,1010}.
 The main conditions that relax the Lipschitz continuity on the derivative is the  majorant condition,
used for example in \cite{F08,F10, MAX1,MAX2,MAX3,FS06}, and the
generalized Lipschitz condition according to X.Wang, used for example in
\cite{AAA,C11,huangy2004, LZJ,XW10,XW9,10100,1010}.  In fact, as
proved in \cite{F10}, if the majorant function has convex derivative,
 these conditions are equivalent. Otherwise, the  Wang's
condition can be seen as a particular case of the majorant
condition. Moreover,  the majorant formulation provides a clear
relationship between the majorant function and the nonlinear
operator under consideration,  simplifying  the proof of convergence
substantially.

Our aim in this paper is to present a new local convergence analysis of the Gauss-Newton method for solving  injective-overdetermined systems of equations under a majorant condition. The convergence, uniqueness, superlinear rate
 and an estimate of the best possible convergence radius will be established without a convexity hypothesis on the derivative of the majorant function, which was assumed in \cite{MAX2}. In addition to the special cases obtained in~\cite{MAX2}, the lack of  convexity of the derivative of the majorant function in this analysis, allows us to obtain two new important special cases, namely, the convergence can be ensured under H\"{o}lder-like  and  generalized Lipschitz conditions.
  In the latter case,  the results are obtained without assuming that the function that defines the condition is nondecreasing, thus generalizing Corollary~6.3 in \cite{C11}.
 Moreover, it is worth to mention that, similarly to the convergence analysis of the  Newton method (see \cite{F10}),  the  hypothesis of convex derivative of the majorant function or nondecreasing of the
function  which defines  the generalized Lipschitz condition, are needed only to obtain quadratic  convergence rate.

The organization of the paper is as follows. In Section
\ref{sec:int.1}, we list some notations and one basic result used in our presentation.  In Section \ref{lkant}, we state the main result and in
Section~\ref{sec:PR} some properties of the
majorant function are established and the main relationships between the majorant function and the
nonlinear function $F$ are presented. The optimal ball of convergence and the uniqueness of the solution  are also
discussed in Section ~\ref{sec:PR}. In Section \ref{sec:proof} our main result is proven and
some applications of this result are obtained in Section \ref{apl}. Some final remarks are offered in Section~\ref{fr}
\subsection{Notation and auxiliary results} \label{sec:int.1}
The following notations and results are used throughout our
presentation.   Let $\banacha$ and $\banachb$ be Hilbert spaces. The open and closed ball
at $a \in \banacha$ with radius $\delta>0$ are denoted, respectively by
$$
B(a,\delta) :=\{ x\in \banacha ;\; \|x-a\|<\delta \}, \qquad B[a,\delta] :=\{ x\in \banacha ;\; \|x-a\|\leqslant \delta \}.
$$
The set $\Omega\subseteq\banacha$ is an open set, the function $F:\Omega\to \banachb$ is continuously differentiable, and $F'(x)$  has a closed image in $\Omega$.

Some properties related to the Moore-Penrose inverse will be needed.
More details about the Moore-Penrose inverse can be found in \cite{G1,W1}.

Let $A: \banacha \to \banachb$ be a continuous and injective linear operator with closed image. The Moore-Penrose inverse $A^\dagger:\banachb \to \banacha$ of $A$ is defined by
$$
A^\dagger:=(A^*A)^{-1} A^*,
$$
where  $A^*$ denotes the adjoint of the linear operator $A$.

\begin{lemma} \label{lem:ban2}
Let $A, B: \banacha \to \banachb$ be a continuous linear operator with closed image. If $A$ is injective and
$\|A^\dagger \|\|A-B\|<1$, then $B$ is injective and $$
\|B^\dagger\|\leq \frac{\|A^\dagger\|}{ 1- \|A^\dagger\|\|A-B\|}.
$$
\end{lemma}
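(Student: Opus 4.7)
The plan is to reduce this to a Banach-lemma-style perturbation argument, exploiting the two defining properties of the Moore--Penrose inverse in the injective closed-range setting: namely that $A^{\dagger}A=I_{\banacha}$ and, once $B$ is shown to be injective, $B^{\dagger}B=I_{\banacha}$ together with $\|BB^{\dagger}\|\leqslant 1$ (as $BB^{\dagger}$ is the orthogonal projection onto the closed range of $B$).

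First I would verify that $B$ is injective. Writing $A^{\dagger}B = A^{\dagger}A + A^{\dagger}(B-A) = I + A^{\dagger}(B-A)$, the hypothesis $\|A^{\dagger}\|\,\|A-B\|<1$ gives $\|A^{\dagger}(B-A)\|<1$, so the standard Neumann/Banach lemma shows that $I+A^{\dagger}(B-A)$ is invertible on $\banacha$ with inverse of norm at most $1/(1-\|A^{\dagger}\|\,\|A-B\|)$. If $Bx=0$ then $(I+A^{\dagger}(B-A))x=A^{\dagger}Bx=0$, and invertibility forces $x=0$. Combined with the hypothesis that $B$ has closed image, this legitimises $B^{\dagger}=(B^{*}B)^{-1}B^{*}$.

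Next, to obtain the norm bound, I would fix $y\in\banachb$ and set $x:=B^{\dagger}y$. Using $A^{\dagger}A=I$ and the splitting $A=B-(B-A)$, I rewrite
\begin{equation*}
x = A^{\dagger}Ax = A^{\dagger}Bx - A^{\dagger}(B-A)x,
\end{equation*}
which rearranges to $\bigl(I+A^{\dagger}(B-A)\bigr)x = A^{\dagger}Bx = A^{\dagger}BB^{\dagger}y$. Since $BB^{\dagger}$ is the orthogonal projection onto $\mathrm{Range}(B)$, $\|BB^{\dagger}y\|\leqslant\|y\|$, and the Banach-lemma bound from the previous step yields
\begin{equation*}
\|x\| \;\leqslant\; \frac{\|A^{\dagger}BB^{\dagger}y\|}{1-\|A^{\dagger}\|\,\|A-B\|} \;\leqslant\; \frac{\|A^{\dagger}\|\,\|y\|}{1-\|A^{\dagger}\|\,\|A-B\|}.
\end{equation*}
Taking the supremum over unit vectors $y$ gives the desired estimate on $\|B^{\dagger}\|$.

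There is no serious obstacle; the only subtlety is the choice of the right algebraic identity, i.e.\ to write $x=A^{\dagger}Ax$ and insert $A=B-(B-A)$ rather than trying to invert $B^{*}B$ directly. Once $A^{\dagger}A=I$ is invoked, everything reduces to the classical Neumann series estimate, and the contractive factor $\|A^{\dagger}\|\,\|A-B\|<1$ delivers both the injectivity of $B$ and the norm bound in a single stroke.
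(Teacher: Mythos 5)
Your proof is correct and complete. Note that the paper itself does not prove Lemma~\ref{lem:ban2} at all: it is stated as a known perturbation result for the Moore--Penrose inverse, with the reader referred to Stewart and Wedin, so there is no in-paper argument to compare against. Your argument is the standard one for the injective closed-range case and all the steps check out: $A^{\dagger}A=I$ holds because $A^{\dagger}=(A^{*}A)^{-1}A^{*}$; the Neumann-series bound on $(I+A^{\dagger}(B-A))^{-1}$ is legitimate; injectivity of $B$ together with the assumed closedness of its range makes $B^{*}B$ invertible (injective plus closed range gives $\|Bx\|\geq c\|x\|$, hence $\langle B^{*}Bx,x\rangle\geq c^{2}\|x\|^{2}$), so $B^{\dagger}$ is well defined; and the identity $(I+A^{\dagger}(B-A))B^{\dagger}y=A^{\dagger}BB^{\dagger}y$ combined with $\|BB^{\dagger}\|\leq 1$ (orthogonal projection onto $\mathrm{Range}(B)$, valid since the spaces are Hilbert) yields exactly the claimed bound. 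The one step worth making explicit in a written version is the passage from ``$B$ injective with closed range'' to ``$B^{*}B$ invertible,'' which you assert but do not justify; it is standard, but it is the only place where the closed-range hypothesis on $B$ is actually used.
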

\section{Local analysis  for the Gauss-Newton method } \label{lkant}
Our goal is to state and prove a local theorem for the Gauss-Newton method, which generalizes the Corollary~8 of \cite{MAX2},
as well as Theorem~2 of \cite{F10}.  First, we  prove some results regarding the scalar
majorant function, which relaxes the Lipschitz condition. Then, we establish the main relationships between the
majorant function and the nonlinear function $F$. We also obtain the optimal ball of convergence and  the uniqueness
of the solution in a suitable region. Finally, we show well definedness and convergence,
along with results on the convergence rates.  The statement of the theorem~is:

\begin{theorem}\label{th:nt}
Let $\banacha$ and $\banachb$ be  Hilbert spaces,
 $\Omega\subseteq \banacha$ be an open set and
$F:{\Omega}\to \banachb$ be a continuously differentiable
function such that $F'$  has a closed image in $\Omega$. Let $x_* \in \Omega,$ $R>0$,
$
\beta:=\|F'(x_*)^{\dagger}\| $ and $ \kappa:=\sup \left\{ t\in [0, R): B(x_*, t)\subset\Omega \right\}.
$
Suppose that $F(x_*)=0$,
$F '(x_*)$ is injective
and there exists a $f:[0,\; R)\to \mathbb{R}$
continuously differentiable such that
  \begin{equation}\label{Hyp:MH}
\beta\left\|F'(x)-F'(x_*+\tau(x-x_*))\right\| \leq
f'\left(\|x-x_*\|\right)-f'\left(\tau\|x-x_*\|\right),
  \end{equation}
  for  all $\tau \in [0,1]$,  $x\in B(x_*, \kappa)$  and
\begin{itemize}
  \item[{\bf h1)}]  $f(0)=0$ and $f'(0)=-1$;
  \item[{\bf  h2)}]  $f'$ is   strictly increasing.
\end{itemize}
Let $\nu  :=\sup \left\{t \in[0, R):f'(t)<0\right\},$
$ \rho :=\sup \left\{\delta \in(0, \nu):{[f(t)/f'(t)-t]}/{t}<1,\; t \;\in \;(0, \delta)\right\}$ and
  $$  r :=\min  \left\{\kappa, \, \rho \right\}
$$
Then, the sequences $\{x_k\}$ and $\{t_k\}$, with starting points $x_0\in B(x_*, r)/\{x_*\}$ and $t_0=\|x_0-x_*\|$, respectively, such that
\begin{equation} \label{eq:DNS}
   x_{k+1}={x_k}-F'(x_k)^{\dagger}F(x_k), \qquad
   t_{k+1} =|t_k-f(t_k)/f'(t_k)|, \qquad k=0,1,\ldots\,,
\end{equation}
are well defined; $\{t_k\}$  is strictly decreasing,  contained in $(0, r)$ and it converges to $0$. Furthermore, $\{x_k\}$ is contained
in $B(x_*,r)$, it converges to the point $x_*$, which is the unique zero of $F$ in $B(x_*, \sigma)$, where
$ \sigma:=\sup\{t\in(0, \kappa): f(t)< 0\}$, and there hold:
\begin{equation}\label{eq:q2}
    \lim_{k \to \infty} \; [{\|x_{k+1}-x_*\|}\big{/}{\|x_k-x_*\|}]=0,\qquad \lim_{k \to \infty}[{t_{k+1}}\big{/}{t_k}]=0.
\end{equation}
Moreover, if $ f(\rho)/(\rho f'(\rho))-1=1$
and $\rho<\kappa$, then $r=\rho$ is the best possible convergence
radius.\\
If, additionally, given $0\leq p\leq1$
\begin{itemize}
  \item[{\bf  h3)}]  the function  $(0,\, \nu) \ni t \mapsto [f(t)/f'(t)-t]/t^{p+1}$ is  strictly increasing,
\end{itemize}
   then the sequence $\{t_{k+1}/t_k^{p+1}\}$
 is strictly decreasing  and we have:
\begin{equation}\label{eq:q3}
\|x_{k+1}-x_*\| \leq \big[t_{k+1}/t_k^{p+1}\big]\,\|x_k-x_*\|^{p+1}\leq \big[t_{1}/t_0^{p+1}\big]\,\|x_k-x_*\|^{p+1}, \qquad k=0,1,\ldots\,.
  \end{equation}
  Consequently, for $k\geq0$,
$$ \|x_{k}-x_*\| \leq\left\{
                             \begin{array}{ll}
                               t_0[t_1/t_0]^k, & \hbox{if \quad p=0;} \\
                               t_0[t_1/t_0]^{((p+1)^k-1)/p}, & \hbox{if \quad p$\neq$0.}
                             \end{array}
                           \right.$$

\end{theorem}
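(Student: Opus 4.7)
\textbf{Proof plan for Theorem \ref{th:nt}.}

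\emph{Properties of the majorant function.} The plan is to first establish, in a separate lemma, the basic facts about $f$ that will drive everything: (a) $\rho>0$ (and thus $r>0$), because $f'(0)=-1$ gives $f(t)/f'(t)=t+o(t)$ as $t\downarrow 0$, so $[f(t)/f'(t)-t]/t\to 0$; (b) for every $t\in(0,\nu)$, $f'(t)<0$ and $f(t)<0$ (since h2 makes $f$ strictly decreasing on $[0,\nu)$), so the quantity $f(t)/f'(t)$ is positive; (c) on $(0,\rho)$ one has $|t-f(t)/f'(t)|<t$, so the scalar Newton iterates $t_k$ stay in $(0,\rho)$, form a strictly decreasing sequence, and converge to a limit $t_\infty\geq 0$ which must satisfy $t_\infty=|t_\infty-f(t_\infty)/f'(t_\infty)|$; if $t_\infty>0$ this would force $f(t_\infty)=0$ or $[f(t_\infty)/f'(t_\infty)-t_\infty]/t_\infty=1$, both contradicting $t_\infty\in(0,\rho)\subset(0,\nu)$. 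Hence $t_k\downarrow 0$, and a second application of the Taylor expansion above gives $t_{k+1}/t_k\to 0$.

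\emph{Control on $F'(x)^{\dagger}$ and the linearisation error.} For $x\in B(x_*,r)\setminus\{x_*\}$, taking $\tau=0$ in \eqref{Hyp:MH} yields $\beta\|F'(x)-F'(x_*)\|\leq f'(\|x-x_*\|)+1<1$ because $\|x-x_*\|<r\leq\rho\leq\nu$. Lemma \ref{lem:ban2} then shows $F'(x)$ is injective with closed range and
\[
\|F'(x)^{\dagger}\|\ \leq\ \frac{\beta}{-f'(\|x-x_*\|)}.
\]
Using injectivity we have $F'(x)^{\dagger}F'(x)=I$, so with $t=\|x-x_*\|$,
\[
x-F'(x)^{\dagger}F(x)-x_*\ =\ -F'(x)^{\dagger}\!\int_0^1\!\bigl[F'(x_*+\tau(x-x_*))-F'(x)\bigr](x-x_*)\,d\tau.
\]
Taking norms, invoking \eqref{Hyp:MH}, and computing $\int_0^1[f'(t)-f'(\tau t)]t\,d\tau=tf'(t)-f(t)$, the two bounds combine to give
\[
\|x-F'(x)^{\dagger}F(x)-x_*\|\ \leq\ \frac{tf'(t)-f(t)}{-f'(t)}\ =\ \Bigl|t-\tfrac{f(t)}{f'(t)}\Bigr|.
\]
Applied at $x=x_k$ with $t=\|x_k-x_*\|\leq t_k$ and using the monotonicity of the scalar Newton map (provable from h2), this yields, by induction, $\|x_k-x_*\|\leq t_k$ for every $k$, so $\{x_k\}$ is well defined, stays in $B(x_*,r)$, and converges to $x_*$. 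The superlinear bound \eqref{eq:q2} for $\|x_{k+1}-x_*\|/\|x_k-x_*\|$ follows from the pointwise estimate above together with $t_{k+1}/t_k\to 0$.

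\emph{Uniqueness and optimal radius.} If $y\in B(x_*,\sigma)$ satisfies $F(y)=0$, apply $F'(x_*)^{\dagger}$ to the Newton-Leibniz identity $F(y)-F(x_*)=\int_0^1 F'(x_*+\tau(y-x_*))(y-x_*)\,d\tau$, use $F'(x_*)^{\dagger}F'(x_*)=I$, and rewrite to get
\[
y-x_*\ =\ -\!\int_0^1 F'(x_*)^{\dagger}\bigl[F'(x_*+\tau(y-x_*))-F'(x_*)\bigr](y-x_*)\,d\tau.
\]
Taking norms and applying \eqref{Hyp:MH} (with $x=x_*+\tau(y-x_*)$ and the inner parameter set to $0$) bounds the right-hand side by $\|y-x_*\|\,[f(\|y-x_*\|)/\|y-x_*\|+1]$; since $f(t)<0$ on $(0,\sigma)$, this forces $y=x_*$. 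For the optimality claim, under $f(\rho)/(\rho f'(\rho))-1=1$ and $\rho<\kappa$ one constructs, in the scalar case $\banacha=\banachb=\mathbb{R}$ and $F=f$, a starting point arbitrarily close to $\rho$ for which the very first Gauss-Newton iterate leaves the ball, so no $r>\rho$ can work.

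\emph{Higher-order rate under h3.} The pointwise estimate $\|x_{k+1}-x_*\|\leq |t-f(t)/f'(t)|$ with $t=\|x_k-x_*\|$ becomes, dividing by $t^{p+1}$ and using h3 together with $t\leq t_k<\rho$,
\[
\|x_{k+1}-x_*\|\ \leq\ \frac{f(t)/f'(t)-t}{t^{p+1}}\,\|x_k-x_*\|^{p+1}\ \leq\ \frac{t_{k+1}}{t_k^{p+1}}\,\|x_k-x_*\|^{p+1}.
\]
The sequence $t_{k+1}/t_k^{p+1}$ is strictly decreasing because $t_k$ is strictly decreasing and h3 is a strict monotonicity, which gives the middle inequality of \eqref{eq:q3}; the second inequality then follows from $t_{k+1}/t_k^{p+1}\leq t_1/t_0^{p+1}$, and the closed-form bound on $\|x_k-x_*\|$ is a direct induction on $k$.

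\emph{Main obstacle.} The delicate point is the scalar analysis without convexity of $f'$: one cannot use the usual monotone-Newton-sequence arguments from the Kantorovich setting, and in particular the quantity $t-f(t)/f'(t)$ need not preserve its sign. The absolute value in the definition of $t_{k+1}$ has to be handled by the contraction estimate $|t-f(t)/f'(t)|<t$ that is baked into the definition of $\rho$, and the monotonicity required to propagate $\|x_k-x_*\|\leq t_k$ has to be extracted from h2 alone. Once this is cleanly set up, the rest of the argument is the standard majorant-Newton bookkeeping adapted to the Moore-Penrose inverse via Lemma \ref{lem:ban2}.
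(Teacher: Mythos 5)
Your plan follows the paper's own route almost step for step: the scalar facts about $n_f(t)=t-f(t)/f'(t)$ (Propositions~\ref{pr:incr1} and~\ref{pr:incr102}), the bound $\|F'(x)^\dagger\|\le \beta/|f'(\|x-x_*\|)|$ obtained from \eqref{Hyp:MH} with $\tau=0$ together with Lemma~\ref{lem:ban2}, the linearization identity giving $\|G_F(x)-x_*\|\le|n_f(\|x-x_*\|)|$, the integral argument for uniqueness, and the use of {\bf h3} for the rate. The one place you genuinely deviate is in propagating $\|x_k-x_*\|\le t_k$ \emph{without} {\bf h3}, via monotonicity of $t\mapsto|n_f(t)|$ extracted from {\bf h2} alone; this is correct, since $|n_f(t)|=e_f(t,0)/|f'(t)|$ is a product of two nonnegative nondecreasing factors, and it is a legitimate alternative to the paper, which instead proves convergence with no comparison to $t_k$ by showing $\|x_{k+1}-x_*\|\le|n_f(\|x_k-x_*\|)|<\|x_k-x_*\|$ and passing to the limit in this fixed-point relation, reserving the comparison $\|x_k-x_*\|\le t_k$ for the {\bf h3} part. (A side remark of yours is wrong but harmless: {\bf h2} already forces $t-f(t)/f'(t)<0$ on $(0,\nu)$, because $f(t)=\int_0^t f'(s)\,ds<tf'(t)$, so the sign \emph{is} fixed. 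Also, \eqref{eq:q2} should be justified by $\lim_{t\to0}|n_f(t)|/t=0$ evaluated at $t=\|x_k-x_*\|$, not by $t_{k+1}/t_k\to0$, since $\|x_k-x_*\|$ may be much smaller than $t_k$; the needed ingredient is in your item (a), so this is only a wording issue.)

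The genuine gap is the optimal-radius claim. Taking ``$F=f$'' does not meet the hypotheses ($f$ lives on $[0,R)$ while $x_*=0$ must be an interior point of the domain), and the assertion that some starting point \emph{inside} the ball, arbitrarily close to distance $\rho$, has its first iterate leave the ball is false: for any $x_0$ with $0<\|x_0-x_*\|<\rho$ the contraction $|n_f(\|x_0-x_*\|)|<\|x_0-x_*\|$ holds and the method converges (the main theorem itself applies). The paper's construction is the odd extension $h(t)=f(t)$ for $t\in[0,\kappa)$ and $h(t)=-f(-t)$ for $t\in(-\kappa,0]$, which satisfies \eqref{Hyp:MH} with majorant $f$ and $x_*=0$; under the hypothesis $f(\rho)/(\rho f'(\rho))-1=1$ the iteration started at $x_0=-\rho$, i.e.\ at distance \emph{exactly} $\rho$, produces the $2$-cycle $-\rho,\rho,-\rho,\dots$ and hence does not converge, which is what rules out every radius larger than $\rho$. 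Failure occurs only on the boundary and only through cycling, never through escape from the ball, so your sketch cannot be repaired without switching to this (or an equivalent) construction.
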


\begin{remark}
If $F '(x_*)$ is invertible in Theorem~\ref{th:nt} we obtain  the
local convergence of the Newton method for  systems of nonlinear
equations, as obtained in Theorem 2 of \cite{F10}.
\end{remark}

\begin{remark}
In particular, if $f'$ is convex,  we can prove that {\bf h3} holds
with $p=1$ and, therefore, in this case, we are led to the result
proven in Corollary~8  of \cite{MAX2}, i.e.,  the local convergence
of the Gauss-Newton method for solving injective-overdetermined systems
of equations.   Hence, the additional assumption that the majorant
function, $f$,  has convex derivative, is only necessary  in order to
obtain quadratic convergence rate. This behavior is similar for the
Newton method, see  \cite{F10}.
\end{remark}

\begin{example}(see \cite{F10})
The following  continuously differentiable functions satisfy {\bf h1}, {\bf h2} and  {\bf h3}:
\begin{itemize}
  \item[{i)}] $f: [0, +\infty)\to \mathbb{R}$ such that $f(t)=t^{1+p}-t $;
  \item[{ii)}] $f: [0, +\infty)\to \mathbb{R}$ such that $f(t)=\mbox{e}^{-t}+t^2-1$.
\end{itemize}
If $0<p<1$,  the derivatives of both  functions  are not convex.
\end{example}

From now on, we assume that all the assumptions of Theorem \ref{th:nt}
hold, with the exception of {\bf h3},  which will be considered to hold only when explicitly stated.
\subsection{Preliminary results} \label{sec:PR}
In this section, we will prove all the statements in Theorem~\ref{th:nt} regarding  the sequence $\{t_k\}$ associated to the  majorant function. The main relationships between the  majorant function  and the nonlinear operator will be also established, as well as the results in Theorem~\ref{th:nt}  related to the uniqueness of the solution and the optimal convergence radius.
\subsubsection{The scalar sequence} \label{sec:PMF}
In this part,  we will check the statements in Theorem~\ref{th:nt}
involving  $\{t_k\}$.

First of all, it is easy to see that the hypothesis  {\bf h1},   {\bf h2} and {\bf
h3} in Theorem~\ref{th:nt} coincide with those one used in Theorem~2 of
\cite{F10}. Moreover,  the constants $ \kappa,\, \nu $, $\rho$ and
$\sigma$ also coincide. Hence, the proofs in this section, which can
be found in section~2.1.1 of \cite{F10},  will be omitted.

\begin{proposition}  \label{pr:incr1}
The constants $ \kappa,\, \nu $ and $\sigma$ are positive and $t-f(t)/f'(t)<0,$ for all $t\in (0,\,\nu).$
\end{proposition}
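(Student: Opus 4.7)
The plan is to verify the four claims of Proposition~\ref{pr:incr1} in turn, relying only on the openness of $\Omega$ together with hypotheses \textbf{h1} and \textbf{h2}. Three of the four claims reduce to short continuity/monotonicity arguments, and only the inequality $t-f(t)/f'(t)<0$ requires a short integral computation.

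First I would argue $\kappa>0$: since $\Omega$ is open and $x_*\in\Omega$, there exists $\delta>0$ with $B(x_*,\delta)\subset\Omega$, so every $t\in(0,\min\{\delta,R\})$ belongs to the set defining $\kappa$, giving $\kappa>0$. Next, $\nu>0$: hypothesis \textbf{h1} gives $f'(0)=-1<0$, and the continuity of $f'$ yields a right-neighborhood of $0$ on which $f'(t)<0$, which forces $\nu>0$. For $\sigma>0$ I would use \textbf{h1} once more: since $f(0)=0$ and $f'(0)=-1<0$, the function $f$ is strictly decreasing at $0$, so $f(t)<0$ for all sufficiently small $t>0$; combined with $\kappa>0$, this gives $\sigma>0$.

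The only step that needs more than a one-line argument is the inequality $t-f(t)/f'(t)<0$ on $(0,\nu)$. My plan is to rewrite it as $f(t)-tf'(t)<0$ (multiplication by the negative quantity $f'(t)$ flips the inequality) and then to evaluate the left-hand side via the fundamental theorem of calculus, using $f(0)=0$:
\begin{equation*}
f(t)-tf'(t)=\int_0^t f'(s)\,ds - t f'(t)=\int_0^t\bigl[f'(s)-f'(t)\bigr]\,ds.
\end{equation*}
Hypothesis \textbf{h2} (strict monotonicity of $f'$) then gives $f'(s)-f'(t)<0$ for $0\leq s<t$, so the integrand is strictly negative on $(0,t)$ and the integral is strictly negative. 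Dividing by $f'(t)<0$ reverses the sign, yielding $f(t)/f'(t)-t>0$, equivalently $t-f(t)/f'(t)<0$.

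No step presents a genuine obstacle; the only small choice is whether to invoke the sign reversal before or after computing the integral, and either order works. This is presumably why the author refers the reader to Section~2.1.1 of \cite{F10} and omits the details.
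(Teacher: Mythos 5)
Your proof is correct and is essentially the argument the paper omits by deferring to Proposition~3 of \cite{F10}: openness of $\Omega$ for $\kappa>0$, continuity of $f'$ with $f'(0)=-1$ for $\nu>0$ and $\sigma>0$, and the identity $f(t)-tf'(t)=\int_0^t[f'(s)-f'(t)]\,ds<0$ via \textbf{h2} for the last claim. No issues.
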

\begin{proof}
The proof follows  as the one of Proposition~3 of \cite{F10}.
\end{proof}
According to {\bf h2} and the definition of $\nu$, we have  $f'(t)< 0$ for all
$t\in[0, \,\nu)$.  Therefore, the Newton iteration map for  $f$ is well defined in
$[0,\, \nu)$. Let us call it $n_f$:
\begin{equation} \label{eq:def.nf}
  \begin{array}{rcl}
  n_f:[0,\, \nu)&\to& (-\infty, \, 0]\\
    t&\mapsto& t-f(t)/f'(t).
  \end{array}
\end{equation}
\begin{proposition}  \label{pr:incr102}
$
\lim_{t\to 0}|n_f(t)|/t=0.
$
As a consequence,  $\rho>0 $ and
$|n_f(t)|<t$ for all $ t\in (0, \, \rho)$.
\end{proposition}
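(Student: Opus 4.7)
The plan is to compute the limit directly from the definition of $n_f$ and the hypotheses \textbf{h1}, and then read off $\rho>0$ and the bound $|n_f(t)|<t$ from the definition of $\rho$ together with Proposition~\ref{pr:incr1}.

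First I would rewrite
\[
 \frac{|n_f(t)|}{t}=\frac{f(t)/f'(t)-t}{t}=\frac{f(t)-tf'(t)}{t\,f'(t)},
\]
using Proposition~\ref{pr:incr1}, which guarantees that $n_f(t)=t-f(t)/f'(t)<0$ on $(0,\nu)$ so the absolute value just flips sign. I would then split the right-hand side as
\[
 \frac{|n_f(t)|}{t}=1-\frac{f(t)/t}{f'(t)}.
\]
By \textbf{h1} we have $f(0)=0$, hence $f(t)/t\to f'(0)=-1$ as $t\to 0^+$, and by continuity of $f'$ we have $f'(t)\to f'(0)=-1$. Thus $f(t)/(t f'(t))\to 1$ and $|n_f(t)|/t\to 0$, giving the first claim.

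For the consequence, since $|n_f(t)|/t\to 0$ there exists $\delta>0$ with $|n_f(t)|/t<1$ for all $t\in(0,\delta)$; this is exactly the inequality $[f(t)/f'(t)-t]/t<1$ entering the definition of $\rho$, and since $\delta\le\nu$ we obtain $\rho\ge\delta>0$. For any $t\in(0,\rho)$ the defining property of $\rho$ then yields $[f(t)/f'(t)-t]/t<1$, i.e., $|n_f(t)|<t$.

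No step presents any serious obstacle: the only subtlety is to note that Proposition~\ref{pr:incr1} is needed to identify $|n_f(t)|$ with $f(t)/f'(t)-t$, so that the inequality matches the expression appearing in the definition of $\rho$; after that, everything follows from the elementary limit computation and the definition of the supremum.
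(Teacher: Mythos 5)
Your proof is correct and takes essentially the route the paper intends: the paper omits the argument and simply defers to Proposition~4 of \cite{F10}, which proceeds by the same direct computation (use Proposition~\ref{pr:incr1} to identify $|n_f(t)|$ with $f(t)/f'(t)-t$, use $f(0)=0$, $f'(0)=-1$ and continuity of $f'$ to get $f(t)/(t f'(t))\to 1$ and hence the limit $0$, then read $\rho>0$ and $|n_f(t)|<t$ off the definition of the supremum). One trivial algebraic slip: $[f(t)/f'(t)-t]/t = f(t)/(t f'(t)) - 1$, not $1 - f(t)/(t f'(t))$; since $f(t)/(t f'(t))\to 1$, the conclusion is unaffected.
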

\begin{proof}
See the proof of Proposition~4 of \cite{F10}.
\end{proof}

Using \eqref{eq:def.nf}, it is easy to see that  the sequence $\{t_k \}$ is equivalently defined as
\begin{equation} \label{eq:tknk}
 t_0=\|x_0-x_*\|, \qquad t_{k+1}=|n_f(t_k)|, \qquad k=0,1,\ldots\, .
\end{equation}
\begin{corollary} \label{cr:kanttk}
The sequence $\{t_k\}$ is well defined, strictly decreasing and  contained in $(0, \rho)$. Moreover,  $\{t_k\}$ converges to $0$ with superlinear rate, i.e.,
$
\lim_{k\to \infty}t_{k+1}/t_k=0.
$
If, additionally, {\bf  h3} holds,  the sequence $\{t_{k+1}/t_k^{p+1}\}$ is strictly decreasing.
\end{corollary}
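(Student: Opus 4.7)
The plan is to set everything up by induction on $k$, using the Newton map $n_f$ from \eqref{eq:def.nf} and the reformulation \eqref{eq:tknk} of $\{t_k\}$, then read off convergence and rate from Proposition~\ref{pr:incr102} and hypothesis \textbf{h3}. Throughout, I will use that, by Proposition~\ref{pr:incr1}, $n_f(t)<0$ for $t\in(0,\nu)$, so that $|n_f(t)|=f(t)/f'(t)-t$ whenever $t$ lies in this interval, and that $\rho\le\nu$ by construction.

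First I would show by induction that $t_k\in(0,\rho)$ for every $k$. The base case holds because $t_0=\|x_0-x_*\|\in(0,r)\subseteq(0,\rho)$. For the inductive step, assuming $t_k\in(0,\rho)$, Proposition~\ref{pr:incr102} gives $|n_f(t_k)|<t_k<\rho$, so $t_{k+1}=|n_f(t_k)|<\rho$; positivity of $t_{k+1}$ follows because Proposition~\ref{pr:incr1} implies $n_f(t_k)<0$, hence $t_{k+1}=f(t_k)/f'(t_k)-t_k>0$. The same inequality $t_{k+1}<t_k$ shows that $\{t_k\}$ is strictly decreasing, and being bounded below by $0$ it converges to some $t_\infty\ge0$.

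To identify the limit I would argue by contradiction: if $t_\infty>0$, then since $t_\infty<\rho\le\nu$ one has $f'(t_\infty)<0$, so $n_f$ is continuous at $t_\infty$ and passage to the limit in $t_{k+1}=|n_f(t_k)|$ yields $t_\infty=|n_f(t_\infty)|<t_\infty$, contradicting Proposition~\ref{pr:incr102}. Hence $t_\infty=0$. The superlinear estimate is then immediate: writing $t_{k+1}/t_k=|n_f(t_k)|/t_k$ and letting $k\to\infty$, Proposition~\ref{pr:incr102} gives $\lim_k t_{k+1}/t_k=\lim_{t\to 0^+}|n_f(t)|/t=0$.

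Finally, under \textbf{h3}, the ratio $t_{k+1}/t_k^{p+1}$ equals $|n_f(t_k)|/t_k^{p+1}=[f(t_k)/f'(t_k)-t_k]/t_k^{p+1}$, which is precisely the function hypothesized to be strictly increasing on $(0,\nu)$. Since $0<t_{k+1}<t_k<\nu$ by the previous steps, strict monotonicity at the two points $t_k,\,t_{k+1}$ yields
\[
\frac{t_{k+2}}{t_{k+1}^{p+1}}=\frac{|n_f(t_{k+1})|}{t_{k+1}^{p+1}}<\frac{|n_f(t_k)|}{t_k^{p+1}}=\frac{t_{k+1}}{t_k^{p+1}},
\]
so $\{t_{k+1}/t_k^{p+1}\}$ is strictly decreasing, as claimed. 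The only mildly delicate point in the whole argument is ensuring that $n_f$ is defined and continuous at each $t_k$ and at the limit $t_\infty$; this is handled by the inclusion $(0,\rho)\subseteq(0,\nu)$ together with $f'<0$ on $[0,\nu)$, so the proof is essentially a bookkeeping exercise once Propositions~\ref{pr:incr1} and~\ref{pr:incr102} are in hand.
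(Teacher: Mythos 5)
Your proof is correct, and it is essentially the argument the paper intends: the paper omits the details, deferring to Corollary~5 of \cite{F10}, and your induction via $n_f$, Propositions~\ref{pr:incr1} and~\ref{pr:incr102}, and the monotonicity in \textbf{h3} is exactly that standard argument, written out in full.
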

\begin{proof}
The proof follows the same ideas of the proof of Corollary~5 of
\cite{F10}.
\end{proof}

\subsubsection{Relationship of the majorant function with the nonlinear function} \label{sec:MFNLO}
In this part we present the main relationships between the majorant
function, $f$, and the nonlinear function, $F$.
\begin{lemma} \label{wdns}
If \,$\| x-x_*\|<\min\{\nu,\kappa\}$, then
$F'(x)^* F'(x) $ is invertible and
$$
\left\|F'(x)^{\dagger}\right\|\leq {\beta}/{|f'(\|
x-x_*\|)|}.
 $$
In particular, $F'(x)^* F'(x)$ is invertible in $B(x_*, r)$.
\end{lemma}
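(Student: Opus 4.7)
The plan is to apply Lemma~\ref{lem:ban2} (the Banach-type perturbation result for the Moore-Penrose inverse) with $A := F'(x_*)$ and $B := F'(x)$. We are given that $F'(x_*)$ is injective with closed image, so $A^\dagger$ is defined and $\|A^\dagger\| = \beta$. The task reduces to bounding $\beta\|F'(x_*)-F'(x)\|$ strictly below $1$.

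To obtain this bound, I would specialize the majorant inequality \eqref{Hyp:MH} to $\tau=0$, which gives
\[
\beta\|F'(x)-F'(x_*)\| \;\leq\; f'(\|x-x_*\|)-f'(0)\;=\;f'(\|x-x_*\|)+1,
\]
using {\bf h1}. Now for $\|x-x_*\|<\nu$ we have $f'(\|x-x_*\|)<0$ by the definition of $\nu$, so the right-hand side is strictly less than $1$. Therefore $\beta\|F'(x_*)-F'(x)\|<1$, and Lemma~\ref{lem:ban2} yields that $F'(x)$ is injective with closed image, so that $F'(x)^*F'(x)$ is invertible and $F'(x)^\dagger=(F'(x)^*F'(x))^{-1}F'(x)^*$ is well defined. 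Furthermore
\[
\|F'(x)^\dagger\|\;\leq\;\frac{\beta}{1-\beta\|F'(x_*)-F'(x)\|}\;\leq\;\frac{\beta}{1-\bigl(f'(\|x-x_*\|)+1\bigr)}\;=\;\frac{\beta}{|f'(\|x-x_*\|)|},
\]
where in the last step I used once more that $f'(\|x-x_*\|)<0$.

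For the final assertion, I would simply invoke the definitions: since $\rho$ is the supremum of a set of values contained in $(0,\nu)$, we have $\rho\leq\nu$, and by construction $r=\min\{\kappa,\rho\}\leq\min\{\kappa,\nu\}$. Hence every $x\in B(x_*,r)$ satisfies the hypothesis $\|x-x_*\|<\min\{\nu,\kappa\}$, so the first part of the lemma applies and $F'(x)^*F'(x)$ is invertible throughout $B(x_*,r)$.

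The argument is essentially a one-step perturbation estimate, so there is no serious obstacle; the only subtlety is noticing that the correct specialization of \eqref{Hyp:MH} is at $\tau=0$ so that the constants $f'(0)=-1$ from {\bf h1} and $f'(\|x-x_*\|)<0$ from the definition of $\nu$ combine to give the clean bound $\beta/|f'(\|x-x_*\|)|$.
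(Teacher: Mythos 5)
Your proof is correct and follows essentially the same route as the paper: specialize \eqref{Hyp:MH} at $\tau=0$, use {\bf h1} and $f'(\|x-x_*\|)<0$ to get $\beta\|F'(x)-F'(x_*)\|<1$, and then apply Lemma~\ref{lem:ban2} to obtain both the injectivity and the bound on $\|F'(x)^\dagger\|$, with the final assertion following from $r\leq\min\{\kappa,\rho\}\leq\min\{\kappa,\nu\}$. The only cosmetic remark is that the closedness of the image of $F'(x)$ is a standing hypothesis of the theorem rather than a conclusion of Lemma~\ref{lem:ban2}, but this does not affect the argument.
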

\begin{proof}
As $\| x-x_*\|<\min\{\nu,\kappa\}$, we have $f'(\| x-x_*\|)<0$. Hence,  using the definition of $\beta$, the inequality \eqref{Hyp:MH} and {\bf h1},  we have
\begin{equation}\label{eq1010}
\|F'(x_*)^{\dagger}\|\|F'(x)-F'(x_*)\|=\beta\|F'(x)-F'(x_*)\|\leq
f'(\| x-x_*\|)-f'(0)< 1.
\end{equation}
Since $F'(x_*)$  is injective, \eqref{eq1010} implies, in view
of Lemma \ref{lem:ban2}, that  $F'(x)$ is injective. So, $F'(x)^*
F'(x) $ is invertible and, by the definition of
 $r$, we obtain that $F'(x)^* F'(x)$ is invertible for all $x\in B(x_*, r)$.
Moreover, from Lemma \ref{lem:ban2} we also have
$$
\left\|F'(x)^{\dagger}\right\|\leq \frac{\beta}{1-\beta\|F'(x)-F'(x_*)\| }\leq \frac{\beta}{1-(f'(\| x-x_*\|)-f'(0))}=\frac{\beta}{|f'(\| x-x_*\|)|} ,
 $$
where $f'(0)=-1$ and $f'<0$ in $[0,\nu)$ are used for obtaining the
last equality.
\end{proof}

Now, it is convenient to study the linearization error of $F$ at
point in~$\Omega$. For this we define
\begin{equation}\label{eq:def.er}
  E_F(x,y):= F(y)-\left[ F(x)+F'(x)(y-x)\right],\qquad y,\, x\in \Omega.
\end{equation}
We will bound this error by the error in the linearization of the
majorant function $f$
\begin{equation}\label{eq:def.erf}
        e_f(t,u):= f(u)-\left[ f(t)+f'(t)(u-t)\right],\qquad t,\,u \in [0,R).
\end{equation}
\begin{lemma}  \label{pr:taylor}
If  $\|x-x_*\|< \kappa$, then  $\beta \|E_F(x, x_*)\|\leq
e_f(\|x-x_*\|, 0).$
\end{lemma}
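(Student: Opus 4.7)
The plan is to represent the linearization error $E_F(x,x_*)$ as an integral of differences of $F'$ evaluated on the segment between $x_*$ and $x$, then invoke the majorant hypothesis \eqref{Hyp:MH} pointwise inside the integral, and finally identify the resulting scalar integral with $e_f(\|x-x_*\|,0)$.

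Since $\|x-x_*\|<\kappa$, the closed segment from $x_*$ to $x$ lies inside $\Omega$, so the chain rule and the fundamental theorem of calculus yield
\begin{equation*}
F(x)-F(x_*)=\int_0^1 F'\bigl(x_*+\tau(x-x_*)\bigr)(x-x_*)\,d\tau.
\end{equation*}
Note that I parametrize the segment starting at $x_*$ (rather than starting at $x$), so that the integrand has exactly the form $F'(x_*+\tau(x-x_*))$ appearing in \eqref{Hyp:MH}. Subtracting $F'(x)(x-x_*)$ from the displayed identity and using the definition of $E_F$ in \eqref{eq:def.er} together with $F(x_*)=0$, I would obtain
\begin{equation*}
E_F(x,x_*)=\int_0^1 \bigl[F'(x)-F'\bigl(x_*+\tau(x-x_*)\bigr)\bigr](x-x_*)\,d\tau.
\end{equation*}

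Next I would take norms, move the norm inside the integral, multiply by $\beta$, and apply \eqref{Hyp:MH} with $t:=\|x-x_*\|$ to get
\begin{equation*}
\beta\,\|E_F(x,x_*)\|\leq \int_0^1 \bigl[f'(t)-f'(\tau t)\bigr]\,t\,d\tau.
\end{equation*}
The right-hand side is a purely scalar computation: the first term integrates to $tf'(t)$, while for the second the substitution $s=\tau t$ gives $\int_0^1 f'(\tau t)\,t\,d\tau=\int_0^t f'(s)\,ds=f(t)-f(0)=f(t)$ by {\bf h1}. Hence the bound becomes $tf'(t)-f(t)$, which by the definition \eqref{eq:def.erf} (and $f(0)=0$) equals $e_f(t,0)=e_f(\|x-x_*\|,0)$.

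There is no real obstacle here; this is a short, routine computation. The only point that requires a little care is the parametrization of the segment, since choosing it to run from $x_*$ to $x$ (rather than the reverse) is what makes the integrand align exactly with the left-hand side of the majorant condition, so that \eqref{Hyp:MH} can be applied directly without any auxiliary change of variable at the vector level.
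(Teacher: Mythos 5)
Your proof is correct and follows essentially the same route as the paper: write $E_F(x,x_*)$ as $\int_0^1[F'(x)-F'(x_*+\tau(x-x_*))](x-x_*)\,d\tau$, bound the integrand by \eqref{Hyp:MH}, and evaluate the scalar integral to get $tf'(t)-f(t)=e_f(t,0)$. The only cosmetic remark is that the identity for $E_F(x,x_*)$ holds without invoking $F(x_*)=0$, so that hypothesis is not actually needed here.
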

\begin{proof}
 Since   $B(x_*, \kappa)$ is convex,  we obtain that $x_*+\tau(x-x_*)\in B(x_*, \kappa)$, for $0\leq \tau \leq 1$.
 Thus,  as $F$ is  continuously differentiable in $\Omega$, the definition of $E_F$ and some simple manipulations yield
$$
\beta\|E_F(x,x_*)\|\leq  \int_0 ^1 \beta \left\|
[F'(x)-F'(x_*+\tau(x-x_*))]\right\|\,\left\|x_*-x\right\| \;
d\tau.
$$
From  the last inequality  and  assumption \eqref{Hyp:MH}, we obtain
$$
\beta\|E_F(x,x_*)\| \leq \int_0 ^1
\left[f'\left(\left\|x-x_*\right\|\right)-f'\left(\tau\|x-x_*\|\right)\right]\|x-x_*\|\;d\tau.
$$
Evaluating the above integral and using the definition of $e_f$, the
statement follows.

\end{proof}
In particular, Lemma \ref{wdns} guarantees that  $F'(x)^*F'(x)$
is invertible in $B(x_*, r)$ and, consequently, the Gauss-Newton
iteration map is well defined.  Let $G_{F}$ be, the
Gauss-Newton iteration map for $F$ in that region:
\begin{equation} \label{NF}
  \begin{array}{rcl}
  G_{F}:B(x_*, r) &\to& \banachb\\
    x&\mapsto& x- F'(x)^{\dagger}F(x).
  \end{array}
\end{equation}
In the next proposition, we will establish an important relationship
between the maps $n_{f}$ and $ G_{F}$. Consequently, we obtain that
$B(x_*, r)$ is invariant under  $ G_{F}$. This result will be
very  important to ensure the good definition of the
Gauss-Newton method.
\begin{lemma} \label{l:wdef}
If \,$\| x-x_*\|<r$,  then $\|G_F(x)-x_{*}\|\leq |n_f(\| x-x_*\|)|.$
Consequently,
$$G_{F}(B(x_*, r))\subset B(x_*, r).$$
\end{lemma}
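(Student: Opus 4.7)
The plan is to express $G_F(x) - x_*$ in terms of the linearization error $E_F(x, x_*)$, and then bound it by combining Lemma~\ref{wdns} with Lemma~\ref{pr:taylor}. The algebraic punchline is that the resulting quotient is exactly $|n_f(\|x - x_*\|)|$, after which monotonicity of $n_f$ from Proposition~\ref{pr:incr102} yields the invariance.

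First I would fix $x \in B(x_*, r) \setminus \{x_*\}$ (the case $x = x_*$ is trivial since $G_F(x_*) = x_*$) and use $F(x_*) = 0$ to rewrite $F(x) = F'(x)(x - x_*) - E_F(x, x_*)$ from the definition~\eqref{eq:def.er}. Since $\|x - x_*\| < r \leq \kappa$, Lemma~\ref{wdns} tells us $F'(x)$ is injective with closed image, so $F'(x)^{\dagger} F'(x) = (F'(x)^*F'(x))^{-1} F'(x)^* F'(x) = I$. Substituting into the definition \eqref{NF} of $G_F$ gives the key identity
\begin{equation*}
G_F(x) - x_* = F'(x)^{\dagger} E_F(x, x_*).
\end{equation*}

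Next, I would take norms and apply Lemma~\ref{wdns} and Lemma~\ref{pr:taylor} in succession:
\begin{equation*}
\|G_F(x) - x_*\| \leq \|F'(x)^{\dagger}\|\,\|E_F(x, x_*)\| \leq \frac{\beta\,\|E_F(x, x_*)\|}{|f'(\|x-x_*\|)|} \leq \frac{e_f(\|x-x_*\|, 0)}{|f'(\|x-x_*\|)|}.
\end{equation*}
Using {\bf h1} to expand the definition~\eqref{eq:def.erf}, $e_f(t, 0) = -f(t) + t f'(t)$, and recalling that $f'(t) < 0$ on $(0, \nu)$ (so $|f'(t)| = -f'(t)$), the right-hand side simplifies to $f(\|x-x_*\|)/f'(\|x-x_*\|) - \|x-x_*\|$. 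By Proposition~\ref{pr:incr1}, this quantity equals $|n_f(\|x-x_*\|)|$ in view of~\eqref{eq:def.nf}, proving the first assertion.

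For the inclusion $G_F(B(x_*, r)) \subset B(x_*, r)$, I would note that $\|x-x_*\| < r \leq \rho$, and then Proposition~\ref{pr:incr102} gives $|n_f(\|x-x_*\|)| < \|x-x_*\| < r$, so $G_F(x) \in B(x_*, r)$. The only point needing any care is the identity $F'(x)^{\dagger} F'(x) = I$, which depends on injectivity being preserved in the ball, and this has already been supplied by Lemma~\ref{wdns}; the rest is bookkeeping, so I do not expect any serious obstacle.
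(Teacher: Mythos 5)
Your proposal is correct and follows essentially the same route as the paper's proof: the identity $G_F(x)-x_*=F'(x)^{\dagger}E_F(x,x_*)$, the chain of bounds via Lemma~\ref{wdns} and Lemma~\ref{pr:taylor}, the identification of $e_f(\|x-x_*\|,0)/|f'(\|x-x_*\|)|$ with $|n_f(\|x-x_*\|)|$ using {\bf h1} and Proposition~\ref{pr:incr1}, and the invariance via Proposition~\ref{pr:incr102}. The only difference is that you spell out the intermediate step $F'(x)^{\dagger}F'(x)=I$ explicitly, which the paper leaves as ``some algebraic manipulation.''
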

\begin{proof}
The first inequality is trivial for $x=x_*$, since $F'(x_*)^{\dagger}F(x_*)=0$. Now, assume that
 $0<\|x-x_*\|<~r$. Lemma \ref{wdns} implies that $F'(x)^* F'(x)$ is invertible. Hence, using  $F(x_*)=0$, some
algebraic manipulation and \eqref{NF},  the following holds
$$
G_F(x)-x_{*}=F'(x)^{\dagger}[F'(x)(x-x_*)-F(x)+F(x_*)].
$$
From the last inequality, \eqref{eq:def.er} and Lemmas
\ref{wdns} and \ref{pr:taylor}, we obtain
\[
\|G_F(x)-x_{*}\| \leq \|F'(x)^{\dagger}\|\|E_{F}(x,x_*)\| \leq
\beta\|E_{F}(x,x_*)\|/|f'(\|x-x_*\|)|\leq
 e_f(\|x-x_*\|, 0)/|f'(\|x-x_*\|)|.
\]
On the other hand, taking into account that $f(0)=0,$ the
definitions of $e_f$ and $n_f$ imply that
$$ e_f(\|x-x_*\|, 0)/|f'(\|x-x_*\|)|=|n_f(\| x-x_*\|)|.$$
Hence, the first statement follows by combining the last two
inequalities.

For the second assertion, take $x\in B(x_*, r)$. Since
$0<\|x-x_*\|<r\leq \rho$, the first inequality of the lemma and the last
inequality of Proposition~\ref{pr:incr102}  imply that $
\|G_F(x)-x_*\|\leq |n_f(\|x-x_*\|)|<\|x-x_*\| $,  thus leading to the
desired result.
\end{proof}
\begin{lemma} \label{le:cl2}
If {\bf h3} holds and $\|x-x_*\|\leq t<r$, then $ \|G_F(x)-x_*\|\leq
[ |n_f(t)|/t^{p+1}]\,\|x-x_*\|^{p+1}. $
\end{lemma}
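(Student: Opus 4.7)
The plan is to combine Lemma \ref{l:wdef}, which already gives a clean bound $\|G_F(x)-x_*\|\leq |n_f(\|x-x_*\|)|$ on the whole ball $B(x_*,r)$, with the monotonicity hypothesis \textbf{h3} in order to replace the ``natural'' argument $\|x-x_*\|$ by the upper bound $t$, picking up the factor $\|x-x_*\|^{p+1}$ in the process.

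First I would dispose of the trivial case $x=x_*$: both sides of the inequality are zero, so nothing has to be shown. For the remainder of the proof, assume $0<\|x-x_*\|\leq t<r$ and set $u:=\|x-x_*\|$. Since $u<r\leq\rho\leq\nu$, the Newton iteration map $n_f$ is defined at $u$ and, by Proposition~\ref{pr:incr1} and the identification $e_f(s,0)/|f'(s)|=|n_f(s)|$ used already inside the proof of Lemma~\ref{l:wdef}, we obtain
\[
   \|G_F(x)-x_*\| \leq |n_f(u)|
   = \left[\frac{|n_f(u)|}{u^{p+1}}\right]u^{p+1}.
\]

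Next I would invoke \textbf{h3}. The hypothesis asserts precisely that the map $s\mapsto [f(s)/f'(s)-s]/s^{p+1}=|n_f(s)|/s^{p+1}$ is strictly increasing on $(0,\nu)$. Since $0<u\leq t<r\leq\nu$, this yields
\[
   \frac{|n_f(u)|}{u^{p+1}} \leq \frac{|n_f(t)|}{t^{p+1}},
\]
and substituting this estimate into the bound above gives exactly
\[
   \|G_F(x)-x_*\|\leq\left[\frac{|n_f(t)|}{t^{p+1}}\right]\|x-x_*\|^{p+1},
\]
as claimed.

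There is no real obstacle here; the only subtlety is that the monotonicity in \textbf{h3} is phrased in terms of $|n_f|/t^{p+1}$ (rewriting $f(t)/f'(t)-t$ as $|n_f(t)|$ uses Proposition~\ref{pr:incr1}), and one must remember that the relevant bound from Lemma~\ref{l:wdef} is stated with the actual distance $\|x-x_*\|$, not an arbitrary upper bound $t$; the job of \textbf{h3} is precisely to allow this upgrade.
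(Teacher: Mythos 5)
Your proof is correct and follows essentially the same route as the paper: dispose of the trivial case $x=x_*$, apply Lemma~\ref{l:wdef} to get $\|G_F(x)-x_*\|\leq |n_f(\|x-x_*\|)|$, and then use the monotonicity in \textbf{h3} (noting $|n_f(s)|=f(s)/f'(s)-s$ on $(0,\nu)$ by Proposition~\ref{pr:incr1}) to pass from $\|x-x_*\|$ to $t$. No gaps; the extra detail about why $u,t\in(0,\nu)$ is a welcome clarification the paper leaves implicit.
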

\begin{proof}
The inequality is trivial for $x=x_*$. If $0<\|x-x_*\|\leq t<r$,
combining the assumption {\bf h3} and \eqref{eq:def.nf}, we obtain
$|n_f(\|x-x_*\|)|/\|x-x_*\|^{p+1}\leq |n_f(t)|/t^{p+1}$. So, using
Lemma~\ref{l:wdef}, the statement follows.
\end{proof}

\subsubsection{Optimal ball of convergence and uniqueness} \label{sec:unique}
In this section, we  obtain the optimal convergence radius and
the uniqueness of the solution.
\begin{lemma} \label{pr:best}
If  $f(\rho)/(\rho f'(\rho))-1=1$
and $\rho < \kappa$, then  $r=\rho$ is the optimal convergence radius.
\end{lemma}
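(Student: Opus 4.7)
The plan is to show optimality in the standard negative sense: for any $\tilde r>\rho$ one can exhibit an $F$ meeting all the hypotheses of Theorem~\ref{th:nt} (with this particular $f$) for which some starting point in $B(x_*,\tilde r)\setminus B(x_*,\rho)$ generates a Gauss-Newton sequence that fails to converge to $x_*$. Since $\rho<\kappa$, the enlarged ball is genuinely inside $\Omega$, so the counterexample is not ruled out by the domain constraint, and this is exactly what ``best possible'' should mean.

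First I would unpack the algebraic meaning of the hypothesis. The equality $f(\rho)/(\rho f'(\rho))-1=1$ says precisely that $f(\rho)/f'(\rho)-\rho=\rho$, and since $f'(\rho)<0$ (because $\rho\leq\nu$) together with $f(\rho)<0$ (because $\rho<\sigma$), one has $f(\rho)/f'(\rho)>0$ and therefore
\[
n_f(\rho)\;=\;\rho-f(\rho)/f'(\rho)\;=\;-\rho,\qquad\text{so}\qquad |n_f(\rho)|=\rho.
\]
Thus the scalar Newton iteration for $f$ starting at $t_0=\rho$ is stationary in absolute value; this is the crucial obstruction I will transplant to the nonlinear side.

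Next I would take $\banacha=\banachb=\mathbb{R}$, $x_*=0$, and define
\[
F(x)\;:=\;\operatorname{sign}(x)\,f(|x|)\quad(x\neq 0),\qquad F(0):=0,
\]
on the open interval $(-\kappa,\kappa)\subset\Omega$. A direct computation gives $F'(x)=f'(|x|)$ for $x\neq 0$, and since $f'(0)=-1$ the one-sided limits agree, so $F\in C^1$ with $F'(0)=-1$; in particular $\beta=\|F'(0)^\dagger\|=1$ and $F'$ is injective wherever $f'(|x|)\neq 0$, which is guaranteed on $B(x_*,\nu)$. The majorant inequality \eqref{Hyp:MH} reduces to $|f'(|x|)-f'(\tau|x|)|\leq f'(\|x-x_*\|)-f'(\tau\|x-x_*\|)$, which holds with equality because $f'$ is strictly increasing (hypothesis \textbf{h2}) and $\|x-x_*\|=|x|$. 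Hence all the hypotheses of Theorem~\ref{th:nt} are satisfied with this $F$ and the given $f$.

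Finally I would run the Gauss-Newton iteration from $x_0=\rho$ (which lies in $B(x_*,\tilde r)$ whenever $\tilde r>\rho$). The computation in the previous paragraph yields $x_1=\rho-f(\rho)/f'(\rho)=n_f(\rho)=-\rho$; then using $F(-\rho)=-f(\rho)$ and $F'(-\rho)=f'(\rho)$ gives $x_2=-\rho-(-f(\rho))/f'(\rho)=\rho$, and the sequence cycles as $\rho,-\rho,\rho,-\rho,\dots$, never approaching $x_*=0$. This rules out any convergence radius larger than $\rho$, so $r=\rho$ is optimal.

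The main obstacle is really the verification that the symmetrized $F$ defined via $\operatorname{sign}(x)f(|x|)$ inherits $C^1$ regularity and the sharp form of the majorant condition at the origin, since $f'(0)=-1\neq 0$ makes $F$ manifestly non-smooth of a naive type; the point is that the sign built into $F$ matches the derivative of $|x|$ in just the right way so that $F'(x)=f'(|x|)$ is continuous at $0$, and this is what makes the oscillating orbit $\pm\rho$ legitimate evidence of optimality rather than an artifact of a discontinuity.
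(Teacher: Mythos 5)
Your proposal is correct and follows essentially the same route as the paper: the paper also takes $\banacha=\banachb=\mathbb{R}$, $x_*=0$, and the odd extension $h(t)=f(t)$ for $t\ge 0$, $h(t)=-f(-t)$ for $t\le 0$ (identical to your $\operatorname{sign}(x)f(|x|)$), verifies the majorant condition via $h'(t)=f'(|t|)$ and the monotonicity of $f'$, and exhibits the same two-cycle $\pm\rho$ forced by $f(\rho)/f'(\rho)=2\rho$. The only differences are cosmetic (starting at $\rho$ rather than $-\rho$, and your side remarks about the signs of $f(\rho)$ and $f'(\rho)$, which are not needed for the cycle computation).
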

\begin{proof}
Assume that $f(\rho)/(\rho f'(\rho))-1=1$ and $\rho < \kappa$.
Define the function $ h:(-\kappa,
\,\kappa)\to \mathbb{R}$ by
\begin{equation} \label{eq:dh1}
 h(t)=
      \begin{cases}
       -f(-t), \quad \;\; t\in  (-\kappa, \,0],\\
        f(t), \quad \quad   \; t\in [0, \,\kappa).
      \end{cases}
\end{equation}
It is straightforward to show that $h(0)=0$,  $h'(0)=-1,$
$h'(t)= f'(|t|)$ and
$$
|h'(0)|\left|h'(t)-h'(\tau t)\right| \leq
    f'(|t|)-f'(\tau|t|), \quad\tau \in [0,1], \quad t\in (-\kappa,\, \kappa).
$$
So, for $F=h$, $\banacha=\banachb=\mathbb{R}$,  $\Omega=(-\kappa,\,
\kappa)$ and $x_*=0$ the assumptions of Theorem~\ref{th:nt} are satisfied. Thus, as $\rho<\kappa $,  it  suffices to show that the
Gauss-Newton method applied for solving   $h(t)=0$, with starting
point $x_0=-\rho$, does not converge. As $f(\rho)/(\rho
f'(\rho)-1=1$, the definition of $h$ in \eqref{eq:dh1} yields
$$
x_{1} =-\rho-h(-\rho)/h'(-\rho)= -\rho+f(\rho)/f'(\rho)=[f(\rho)/(\rho f'(\rho))-1]\rho=\rho.
$$
Again, the definition of $h$ in \eqref{eq:dh1}  and the assumption
$f(\rho)/(\rho f'(\rho)-1=1$ lead to
$$
x_{2} =\rho-h(\rho)/h'(\rho)=\rho-f(\rho)/f'(\rho)=-[f(\rho)/(\rho f'(\rho))-1] \rho=-\rho.
$$
Therefore, the Gauss-Newton method for  solving $h(t)=0$, with staring point $x_0=-\rho$, produces the cycle
$$
x_0=-\rho,\quad x_1=\rho, \quad  x_2=-\rho,\; \ldots \;.
$$
As a consequence, it does not converge. Therefore, the lemma is proved.
\end{proof}

\begin{lemma} \label{pr:uniq}
 The point  $x_*$ is the unique zero of $F$ in $B(x_*, \sigma)$.
\end{lemma}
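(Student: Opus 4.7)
The plan is to assume for contradiction that $y \in B(x_*, \sigma)$ is a zero of $F$ with $y \neq x_*$, and derive a contradiction with the defining property of $\sigma$. I would first check that for every $s \in (0, \sigma)$ one has $f(s) < 0$: indeed $f(0) = 0$ and $f'(0) = -1$ force $f$ to be negative for small $t > 0$, and by strict monotonicity of $f'$ the function $f$ is strictly convex-like in the sense that the set $\{t \in (0, \kappa) : f(t) < 0\}$ is an interval $(0, \sigma)$.

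The core computation uses the integral identity
\begin{equation*}
F(y) - F(x_*) = \int_0^1 F'(x_* + \tau(y-x_*))(y-x_*)\,d\tau.
\end{equation*}
Since $F(y) = F(x_*) = 0$, this rewrites as
\begin{equation*}
F'(x_*)(y - x_*) = -\int_0^1 \bigl[F'(x_* + \tau(y-x_*)) - F'(x_*)\bigr](y-x_*)\,d\tau.
\end{equation*}
Because $F'(x_*)$ is injective with closed range, $F'(x_*)^\dagger F'(x_*) = I$, so applying $F'(x_*)^\dagger$ and taking norms yields
\begin{equation*}
\|y - x_*\| \leq \beta \int_0^1 \|F'(x_* + \tau(y-x_*)) - F'(x_*)\|\, d\tau \cdot \|y-x_*\|.
\end{equation*}

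Next I would estimate the integrand using the majorant condition \eqref{Hyp:MH}. Setting $s := \|y - x_*\|$ and applying \eqref{Hyp:MH} at the point $x_* + \tau(y - x_*) \in B(x_*, \kappa)$ (which lies in $\Omega$ by convexity of $B(x_*, \kappa)$) with the inner parameter equal to $0$, one gets
\begin{equation*}
\beta \|F'(x_* + \tau(y-x_*)) - F'(x_*)\| \leq f'(\tau s) - f'(0) = f'(\tau s) + 1.
\end{equation*}
Substituting this back, dividing by $s > 0$, and using $\int_0^1 f'(\tau s)\,d\tau = f(s)/s$ (valid since $f(0) = 0$), I obtain $1 \leq f(s)/s + 1$, i.e., $f(s) \geq 0$. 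This contradicts the fact that $s \in (0, \sigma)$ forces $f(s) < 0$, so $y = x_*$.

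The only delicate step is the opening move of verifying $f(s) < 0$ throughout $(0, \sigma)$ rather than merely on an open dense subset, together with justifying the identity $F'(x_*)^\dagger F'(x_*) = I$; both are routine, so the proof is essentially a one-page calculation without any serious obstacle.
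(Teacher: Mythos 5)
Your proof is correct and follows essentially the same route as the paper: write $y-x_*$ via $F'(x_*)^\dagger F'(x_*)=I$ and the integral form of $F(y)-F(x_*)$, bound the integrand with the majorant condition \eqref{Hyp:MH} at $x=x_*+\tau(y-x_*)$ with inner parameter $0$, and conclude $f(\|y-x_*\|)\geq 0$, contradicting $f<0$ on $(0,\sigma)$. Your extra remark that $\{t\in(0,\kappa):f(t)<0\}$ is an interval follows, as you note, from the strict convexity of $f$ implied by \textbf{h2}, which the paper leaves implicit.
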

\begin{proof}
 Assume that $y \in B(x_*, \sigma)$  and $F(y)=0$.  Using $F(x_*)=0$ and $F(y)=0$, we have
$$y-x_*=F'(x_*)^{\dagger}[F'(x_*)(y-x_*)-F(y)+F(x_*)].$$
Combining the last equation with properties of the norm and the
definition of $\beta$,  we obtain
$$
\|y-x_*\|\leq\beta\int_{0}^{1}\|F'(x_*)-F'(x_*+u(y-x_*))\|\|y-x_*\|du.$$
Using \eqref{Hyp:MH} with $x=x_*+u(y-x_*)$, $\tau=0$ and some algebraic manipulation, we easily conclude,  from  the last equality, that
$$
\|y-x_*\|\leq  \int_{0}^{1}[f'(u\|y-x_*\|)-f'(0)]\|y-x_*\|du=
f(\|y-x_*\|)+\|y-x_*\|.
$$
Since $0<\|y-x_*\|< \sigma$, i.e., $f(\|y-x_*\|)<0$, the last inequality implies that
$
\|y-x_*\|< \|y-x_*\|,
$
which is a contradiction. Hence,  $y=x_*$.
\end{proof}

\subsection{Gauss-Newton sequence} \label{sec:proof}
In this section,  we will prove the statements in Theorem~\ref{th:nt} involving  the Gauss-Newton sequence $\{x_k\}$. First,
note that the first equation in  \eqref{eq:DNS} together with \eqref{NF} imply that   the sequence $\{x_k\}$  satisfies
\begin{equation} \label{GF}
x_{k+1}=G_F(x_k),\qquad k=0,1,\ldots \,.
\end{equation}
which is indeed an equivalent definition of this sequence.
\begin{corollary}\label{pr:nthe}
The sequence $\{x_k\}$ is well defined,  contained in $B(x_*,r)$
and it converges to the point $x_*$, which is the unique zero of $f$ in
$B(x_*,\sigma)$. Furthermore it holds:
\begin{equation} \label{eq:q2e}
    \lim_{k \to \infty}\left[\|x_{k+1}-x_*\|\big{/}\|x_k-x_*\|\right]=0.
  \end{equation}
If, additionally, {\bf  h3} holds,   the sequences  $\{x_k\}$  and $\{t_k\}$ satisfy
\begin{equation}\label{eq:q333}
\|x_{k+1}-x_*\| \leq \big[t_{k+1}/t_k^{p+1}\big]\,\|x_k-x_*\|^{p+1}\leq \big[t_{1}/t_0^{p+1}\big]\,\|x_k-x_*\|^{p+1}, \qquad k=0,1,\ldots\,.
  \end{equation}
  Consequently, for $k\geq0$,
$$ \|x_{k}-x_*\| \leq\left\{
                             \begin{array}{ll}
                               t_0[t_1/t_0]^k, & \hbox{if \quad p=0;} \\
                               t_0[t_1/t_0]^{((p+1)^k-1)/p}, & \hbox{if \quad p$\neq$0.}
                             \end{array}
                           \right.$$
\end{corollary}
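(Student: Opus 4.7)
The plan is to exploit the equivalent description $x_{k+1}=G_F(x_k)$ from \eqref{GF} so that Corollary \ref{pr:nthe} becomes essentially an induction wrapping Lemmas \ref{wdns}, \ref{l:wdef}, \ref{le:cl2}, \ref{pr:uniq}, Proposition \ref{pr:incr102}, and Corollary \ref{cr:kanttk}. Since every hard ingredient has already been established, the proof is structural.

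First, I would establish by induction on $k$ that $x_k$ is well defined and lies in $B(x_*,r)$: the base case is the hypothesis on $x_0$; for the inductive step, Lemma \ref{wdns} ensures $F'(x_k)^*F'(x_k)$ is invertible on $B(x_*,r)$ so that $G_F(x_k)$ makes sense, and Lemma \ref{l:wdef} gives $G_F(B(x_*,r))\subset B(x_*,r)$. Strict decrease of $\{\|x_k-x_*\|\}$ to some $L\geq 0$ follows from the pointwise estimate $\|x_{k+1}-x_*\|\leq|n_f(\|x_k-x_*\|)|<\|x_k-x_*\|$ supplied by Lemma \ref{l:wdef} and Proposition \ref{pr:incr102}. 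Passing to the limit (using continuity of $n_f$) gives $L\leq|n_f(L)|$, which combined with $|n_f(t)|<t$ on $(0,\rho)$ forces $L=0$. Uniqueness of $x_*$ as a zero of $F$ in $B(x_*,\sigma)$ is exactly Lemma \ref{pr:uniq}, and the superlinear bound \eqref{eq:q2e} comes from dividing the same inequality by $\|x_k-x_*\|$ and invoking $|n_f(t)|/t\to 0$ from Proposition \ref{pr:incr102}.

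For the h3-refined bound \eqref{eq:q333}, I first need the comparison $\|x_k-x_*\|\leq t_k$. Here h3 does the work: by Proposition \ref{pr:incr1} one has $|n_f(t)|=f(t)/f'(t)-t$ on $(0,\nu)$, so $|n_f(t)|=t^{p+1}\cdot\bigl[|n_f(t)|/t^{p+1}\bigr]$ is a product of two positive strictly increasing factors on $(0,\nu)$, whence $|n_f|$ is itself strictly increasing. Together with $\|x_0-x_*\|=t_0$, Lemma \ref{l:wdef} then yields $\|x_{k+1}-x_*\|\leq|n_f(\|x_k-x_*\|)|\leq|n_f(t_k)|=t_{k+1}$ by induction. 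With this comparison in hand, Lemma \ref{le:cl2} applied at $t=t_k$ gives the first inequality of \eqref{eq:q333}; the second follows because $\{t_{k+1}/t_k^{p+1}\}$ is strictly decreasing by Corollary \ref{cr:kanttk}.

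The closed-form bounds are then obtained by unrolling the recursion $\|x_{k+1}-x_*\|\leq C\|x_k-x_*\|^{p+1}$ with $C:=t_1/t_0^{p+1}$: a direct induction gives the geometric formula when $p=0$, and taking logarithms turns the $p\neq 0$ case into the linear recurrence $b_{k+1}\leq(p+1)b_k+\log C$, whose solution produces the exponent $((p+1)^k-1)/p$. The only parts requiring genuine care are the monotonicity argument for $|n_f|$ under h3 (and hence the comparison $\|x_k-x_*\|\leq t_k$) and the bookkeeping of exponents in the closed-form estimate for $p\neq 0$; everything else is a clean assembly of the preliminary lemmas.
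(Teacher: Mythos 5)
Your proposal is correct and follows essentially the same route as the paper: induction with Lemmas \ref{wdns} and \ref{l:wdef} for well-definedness, invariance and convergence, Lemma \ref{pr:uniq} for uniqueness, Proposition \ref{pr:incr102} for \eqref{eq:q2e}, and the comparison $\|x_k-x_*\|\leq t_k$ together with Lemma \ref{le:cl2} and Corollary \ref{cr:kanttk} for \eqref{eq:q333}. The only cosmetic difference is that you obtain $\|x_{k+1}-x_*\|\leq |n_f(t_k)|=t_{k+1}$ by first proving that $|n_f|$ is strictly increasing under {\bf h3} (via the factorization $|n_f(t)|=t^{p+1}\,[|n_f(t)|/t^{p+1}]$), whereas the paper reads it off directly from Lemma \ref{le:cl2} and $\|x_k-x_*\|^{p+1}\leq t_k^{p+1}$; both steps are valid.
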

\begin{proof}
Since $x_0\in B(x_*,r)/\{x_*\},$ i.e., $0<\|x_{0}-x_*\|<r$,  and $r\leq \nu$, combining \eqref{GF}, the inclusion in  Lemma~\ref{l:wdef},
Lemma \ref{wdns} and an induction argument, we conclude that  $\{x_k\}$  is well defined and it remains in $B(x_*,r)$.

We will now prove that $\{x_k\}$ converges to $x_*$. Since $\|x_{k}-x_*\|<r\leq \rho$,  for $ k=0,1,\ldots \,$,  we obtain from
\eqref{GF}, Lemma~\ref{l:wdef} and Proposition~\ref{pr:incr102}, that
\begin{equation}\label{eq:conv1}
0\leq\|x_{k+1}-x_*\|=\|G_F(x_k)-x_*\|\leq |n_f(\|x_{k}-x_*\|)|<\|x_{k}-x_*\|,\qquad  k=0,1,\ldots \,.
\end{equation}
So, $\{\|x_{k}-x_*\| \}$ is a bounded and  strictly decreasing
sequence. Therefore $\{\|x_{k}-x_*\| \}$ converges. Let
$\ell_*=\lim_{k\to \infty}\|x_{k}-x_*\|$.
 Since  $\{\|x_{k}-x_*\| \}$ remains in $(0, \,\rho)$ and is strictly decreasing, we have $0\leq \ell_*<\rho$. Thus, taking
the limit in \eqref{eq:conv1} with $t$ converging to $0$ and using  the  continuity of $n_f$ in $[0, \rho)$,
we obtain that  $0\leq \ell_{*}=|n_f(\ell_*)|$. But, if $\ell_* \neq 0$, Proposition~\ref{pr:incr102}  implies $|n_f(\ell_*)|<\ell_*$,
hence $\ell_*=0$. Therefore,  the convergence $x_k \rightarrow x_*$ is proved. The uniqueness was proved in Lemma~\ref{pr:uniq}.

In order to prove the equality in \eqref{eq:q2e}, note that equation \eqref{eq:conv1} implies
$$
\left[\|x_{k+1}-x_*\|\big{/}\|x_{k}-x_*\|\right]\leq \left[|n_f(\|x_{k}-x_*\|)|\big{/}\|x_{k}-x_*\|\right], \qquad k=0,1, \ldots.
$$
Since $\lim_{k\to \infty}\|x_{k}-x_*\|=0$, the  desired inequality follows from the first statement in  Proposition~\ref{pr:incr102}.

Now we will show  \eqref{eq:q333}. First, we will prove by induction  that the sequences  $\{x_k \}$ and  $\{t_k \}$, defined in \eqref{GF} and \eqref{eq:tknk}, respectively,  satisfy
\begin{equation}\label{eq:mjs}
\|x_{k}-x_*\|\leq t_k, \qquad k=0,1, \ldots.
\end{equation}
Due to $t_0=\|x_0-x_*\|$,  the above inequality holds for $k=0$.
Now, assume that $\|x_{k}-x_*\|\leq t_k$.  Using \eqref{GF},
Lemma~\ref{le:cl2}, the induction assumption  and \eqref{eq:tknk}, we
obtain that
$$
\|x_{k+1}-x_*\|=\|G_F(x_k)-x_*\|\leq \frac{|n_f(t_k)|}{t_{k}^{p+1}}\,\|x_{k}-x_*\|^{p+1}\leq |n_f(t_k)|=t_{k+1},
$$
and \eqref{eq:mjs} holds. Therefore, it is easily seen that the
first inequality in \eqref{eq:q333} follows by combining
 \eqref{GF}, \eqref{eq:mjs},  Lemma~\ref{le:cl2}  and \eqref{eq:tknk}. The second inequality in \eqref{eq:q333} is immediate,
due to the fact that the sequence $\{t_{k+1}/t_k^{p+1}\}$ is
strictly decreasing. Finally, for the last part of the corollary, it
is enough to use \eqref{eq:q333} and some simple algebraic manipulations.
\end{proof}

The proof of Theorem~\ref{th:nt} follows from Corollary~\ref{cr:kanttk}, the Lemmas~\ref{pr:best} and \ref{pr:uniq}  and Corollary~\ref{pr:nthe}.

\section{Special Cases} \label{apl}
In this section, we present some special cases of Theorem~\ref{th:nt}.
\subsection{Convergence results  under H\"{o}lder-like and Smale  conditions}
In this section, we  present  a local convergence theorem for the Gauss-Newton method under a H\"{o}lder-like condition, see \cite{F10,huangy2004}.  We also provide a Smale's theorem on the Gauss-Newton method for analytical functions, cf. \cite{S86}.
\begin{theorem}\label{th:HV}
Let $\banacha$ and $\banachb$ be  Hilbert spaces,
 $\Omega\subseteq \banacha$ be an open set and
$F:{\Omega}\to \banachb$ be a continuously differentiable
function such that $F'$  has a closed image in $\Omega$. Let $x_* \in \Omega,$ $R>0$,
$
\beta:=\|F'(x_*)^{\dagger}\|$ and $ \kappa:=\sup \left\{ t\in [0, R): B(x_*, t)\subset\Omega \right\}.
$
Suppose that $F(x_*)=0$,
$F '(x_*)$ is injective
and there exists a constant $K>0$ and $ 0< p \leq 1$ such that
$$
\beta\left\|F'(x)-F'(x_*+\tau(x-x_*))\right\|\leq  K(1-\tau^p) \|x-x_*\|^p, \qquad   x\in B(x_*, \kappa) \quad \tau \in [0,1].
$$
Let $$r=\min \{\kappa, \,[(p+1)/((2p+1) K)]^{1/p}\}.$$ Then, the sequences $\{x_k\}$ and $\{t_k\}$, with starting points $x_0\in B(x_*, r)/\{x_*\}$ and $t_0=\|x_0-x_*\|$, respectively, such that
\begin{equation}\label{1212}
    x_{k+1} ={x_k}-F'(x_k) ^{\dagger}F(x_k), \qquad t_{k+1} =\frac{ K\,p\, t_{k}^{p+1}}{(p+1)[1-K\,t_k^{p}]},\qquad k=0,1,\ldots\,,
\end{equation}
are well defined; $\{t_k\}$  is strictly decreasing, contained in $(0, r)$ and it converges to $0$. Furthermore, $\{x_k\}$ is contained in $B(x_*,r)$, it converges to the point $x_*$, which is the unique zero of $F$ in $B(x_*, [(p+1)/K]^{1/p})$, and there hold:
\begin{equation}\label{12121}
 \|x_{k+1}-x_*\|
\leq \frac{ K\,p}{(p+1)[1- K\,t_k^{p}]}\,\|x_{k}-x_*\|^{p+1}\leq \frac{ K\,p}{(p+1)[1- K\,\|x_0-x_*\|^{p}]}\,\|x_{k}-x_*\|^{p+1},
\end{equation}
 for all $k=0,1,\ldots,$ and
\begin{equation}\label{12122}
\|x_{k}-x_*\| \leq  \left[ \frac{K\,p\,\|x_0-x_*\|^{p}}{(p+1)[1- K\,\|x_0-x_*\|^{p}]}\right]^{[(p+1)^k-1]/p}\,\|x_0-x_*\|, \qquad k=0,1,\ldots .
\end{equation}
Moreover, if  $[(p+1)/((2p+1) K)]^{1/p}<\kappa$, then $r=[(p+1)/((2p+1)K)]^{1/p}$ is the best  possible convergence radius.
\end{theorem}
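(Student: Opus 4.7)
The plan is to deduce Theorem~\ref{th:HV} from Theorem~\ref{th:nt} by exhibiting an explicit majorant function that captures the H\"{o}lder-like hypothesis. Define
\[
f(t):=\frac{K}{p+1}\,t^{p+1}-t,\qquad t\in[0,R),
\]
so that $f'(t)=Kt^{p}-1$. Then the identity
$
f'(t)-f'(\tau t)=K(1-\tau^{p})t^{p}
$
shows that the assumed H\"{o}lder-like inequality is precisely the majorant condition \eqref{Hyp:MH}. One immediately checks that $f(0)=0$, $f'(0)=-1$ and that $f'$ is strictly increasing for $p>0$, so \textbf{h1} and \textbf{h2} hold.

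Next, I would compute the constants $\nu,\rho,\sigma$ of Theorem~\ref{th:nt} by hand. Solving $f'(t)<0$ yields $\nu=(1/K)^{1/p}$. A short algebraic manipulation gives
\[
\frac{f(t)}{f'(t)}-t=\frac{pKt^{p+1}/(p+1)}{1-Kt^{p}},
\]
so the inequality $[f(t)/f'(t)-t]/t<1$ is equivalent to $Kt^{p}(2p+1)/(p+1)<1$, producing
$\rho=\bigl((p+1)/((2p+1)K)\bigr)^{1/p}$. Taking the minimum with $\kappa$ recovers the radius $r$ of the theorem. Similarly, $f(t)<0$ is equivalent to $t<\bigl((p+1)/K\bigr)^{1/p}$, which identifies the uniqueness ball. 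For the auxiliary hypothesis \textbf{h3}, dividing the expression above by $t^{p+1}$ gives $\frac{pK/(p+1)}{1-Kt^{p}}$, which is strictly increasing on $(0,\nu)$; thus \textbf{h3} holds with the same $p$. The scalar recursion $t_{k+1}=|n_{f}(t_{k})|$ then reduces to exactly the formula displayed in \eqref{1212}, and the bound $t_{k+1}/t_{k}^{p+1}\le Kp/[(p+1)(1-K\|x_{0}-x_{*}\|^{p})]$ follows from monotonicity of $\{t_{k}\}$, delivering \eqref{12121}. The closed-form estimate \eqref{12122} is the direct specialization of the last inequality in Theorem~\ref{th:nt} to the present $t_{0},t_{1}$.

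For optimality when $\rho<\kappa$, I would simply verify the hypothesis $f(\rho)/(\rho f'(\rho))-1=1$ of Lemma~\ref{pr:best}. Using $K\rho^{p}=(p+1)/(2p+1)$, one computes $f(\rho)=-2p\rho/(2p+1)$ and $f'(\rho)=-p/(2p+1)$, whence $f(\rho)/(\rho f'(\rho))=2$, as required.

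No single step is particularly deep; the mild obstacle is bookkeeping, namely carrying out the algebraic simplifications in the correct order so that the quantities $\nu,\rho,\sigma$ and the iteration formula for $\{t_{k}\}$ match the exact expressions in Theorem~\ref{th:HV}. Once $f$ is identified and the constants are computed explicitly, every conclusion of Theorem~\ref{th:HV}---well-definedness, convergence, the uniqueness ball, the rate estimates, and optimality---is an immediate translation of the corresponding conclusion of Theorem~\ref{th:nt}.
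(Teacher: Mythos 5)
Your proposal is correct and follows exactly the paper's own route: the paper likewise defines $f(t)=Kt^{p+1}/(p+1)-t$, notes that the H\"older-like hypothesis is condition \eqref{Hyp:MH} for this $f$, checks \textbf{h1}--\textbf{h3}, computes $\nu=(1/K)^{1/p}$, $\rho=[(p+1)/((2p+1)K)]^{1/p}$, the uniqueness radius $[(p+1)/K]^{1/p}$ and the identity $f(\rho)/(\rho f'(\rho))-1=1$, and then invokes Theorem~\ref{th:nt}. Your version merely carries out explicitly the algebra the paper declares ``immediate,'' and all of your computations check out.
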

\begin{proof}
It is immediate to prove that  $F$, $x_*$ and $f:[0, \kappa)\to \mathbb{R}$, defined by
$
f(t)=Kt^{p+1}/(p+1)-t,
$
satisfy the inequality \eqref{Hyp:MH} and the conditions  {\bf h1}, {\bf h2} and  {\bf h3} in Theorem \ref{th:nt}.
In this case, it is easily seen that  $\rho$ and $\nu$, as defined in Theorem \ref{th:nt}, satisfy
$$
\rho=[(p+1)/((2p+1)K)]^{1/p} \leq \nu=[1/ K]^{1/p},
$$
and, as a consequence,  $r=\min \{\kappa,\; [(p+1)/((2p+1)
K)]^{1/p}\}$. Moreover,   $f(\rho)/(\rho f'(\rho))-1=1$,
$f(0)=f([(p+1)/ K]^{1/p})=0$ and  $f(t)<0 $ for all $t \in (0,
[(p+1)/K]^{1/p})$. Therefore, the statements of the theorem follow from
 Theorem~\ref{th:nt}.
\end{proof}
\begin{remark}
For $p=1$ in the previous theorem, we obtain the convergence of the
Gauss-Newton method for  injective-overdetermined systems of
equations under a Lipschitz condition, as obtained in Corollary~19 of \cite{MAX2}.
\end{remark}
In the following numerical example, the results of this section are
illustrated.

\begin{example}
Let $ (a,b) \in \mathbb{R}^2-\{(0,0)\}$. Consider the function $H:
\mathbb{R}\to \mathbb{R}^{2}$  defined by
$$H(x):=(ax^{4/3}-2x,bx^{4/3}+x)^T.$$
It is easy to check that $H(0)=0$, i.e., $x_*=~0,$  $$ H'(x)=\left(
        \begin{array}{c}
          \frac{4}{3}ax^{1/3}-2 \\
          \frac{4}{3}bx^{1/3}+1 \\
        \end{array}
      \right),
 \qquad
H^{\dagger}(x)={9}/\left({16(a^2+b^2)x^{2/3}-24(2a-b)x^{1/3}+45}\right)H'(x)^T,$$
and
$$
\beta=\sqrt{5}/5, \qquad \beta\left\|H'(x)-H'(\tau x)\right\|\leq
(4\sqrt{5(a^2+b^2)}/15)(1-\tau^{1/3}) |x|^{1/3}, \quad   x\in
\mathbb{R} \quad \tau \in [0,1].
$$
Hence, applying the Theorem~\ref{th:HV} with
$$x_*=~0, \quad F=H, \quad p=1/3, \quad K=(4\sqrt{5(a^2+b^2)}/15), \quad r=(3/\sqrt{5(a^2+b^2)})^3,$$
we can conclude that the sequences $\{x_k\}$ and $\{t_k\}$ as
defined in \eqref{1212}, with starting points $x_0\in
B(0,(3/\sqrt{5(a^2+b^2)})^3)/\{0\}$
 and $t_0=\|x_0\|$, respectively, are well defined; $\{t_k\}$  is strictly decreasing,  contained in $(0, (3/\sqrt{5(a^2+b^2)})^3)$
 and it converges to $0$. Furthermore, $\{x_k\}$ is contained in $B(0,(3/\sqrt{5(a^2+b^2)})^3)$, it converges to the point $x_*$,
 which is the unique zero of $F$ in $B(x_*,(5/\sqrt{5(a^2+b^2)})^3)$,  and the  inequalities \eqref{12121}, and \eqref{12122} hold.
 Moreover, $(3/\sqrt{5(a^2+b^2)})^3$ is the best  possible convergence radius.
\end{example}
Below, we present a theorem correspondent to Theorem \ref{th:nt} under  Smale's condition,  which  has  first appeared  in Dedieu and Shub \cite{MR1651750}, see also Corollary~23
of  \cite{MAX2}.
\begin{theorem}\label{theo:Smale}
Let $\banacha$ and $\banachb$ be  Hilbert spaces,
 $\Omega\subseteq \banacha$ be an open set and
$F:{\Omega}\to \banachb$  an analytic function such that $F'$  has a closed image in $\Omega$. Let $x_* \in \Omega,$ $R>0$,
$
\beta:=\|F'(x_*)^{\dagger}\| $ and $ \kappa:=\sup \left\{ t\in [0, R): B(x_*, t)\subset\Omega \right\}.
$
Suppose that $F(x_*)=0$,  $F '(x_*)$ is injective and
$$
 \gamma := \sup _{ n > 1 }\beta\left\| \frac
{F^{(n)}(x_*)}{n !}\right\|^{1/(n-1)}<+\infty.
$$
Let
$$
r:=\min \left\{\kappa,\big( 5-
\sqrt{17}\big)/(4\gamma)\right\}.
$$
Then, the sequences $\{x_k\}$ and $\{t_k\}$, with starting points $x_0\in B(x_*, r)/\{x_*\}$ and $t_0=\|x_0-x_*\|$, respectively, such that
$$
    x_{k+1} ={x_k}-F'(x_k) ^{\dagger}F(x_k), \qquad t_{k+1} =\frac{  \gamma t_{k}^{2}}{2(1-\gamma t_k)^2-1 },\qquad k=0,1,\ldots\,,
$$
are well defined; $\{t_k\}$  is strictly decreasing, contained in $(0, r)$ and it converges to $0$. Furthermore,  $\{x_k\}$ is contained in $B(x_*,r)$,
it converges to the point $x_*$, which is the unique zero of $F$ in $B(x_*,1/(2\gamma))$, and there hold:
$$
    \|x_{k+1}-x_*\| \leq
    \frac{\gamma}{2(1-\gamma t_k)^2-1}\|x_k-x_*\|^2\leq \frac{\gamma}{2(1-\gamma \|x_0-x_*\|)^2-1}\|x_k-x_*\|^2,\qquad  k=0,1,\ldots,
$$
and
$$
\|x_{k}-x_*\| \leq  \left[ \frac{\gamma\|x_0-x_*\|}{2(1- \gamma\,\|x_0-x_*\|)^2-1}\right]^{(2^k-1)}\,\|x_0-x_*\|, \qquad k=0,1,\ldots .
$$
Moreover, if  $(5-
\sqrt{17})/(4\gamma)<\kappa$, then
$r=(5-
\sqrt{17})/(4\gamma)$ is the
best possible convergence  radius.
\end{theorem}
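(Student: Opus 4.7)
The approach is to reduce Theorem~\ref{theo:Smale} to Theorem~\ref{th:nt} by choosing the classical Smale majorant
$$f(t):=\frac{t}{1-\gamma t}-2t,\qquad t\in[0,1/\gamma).$$
First I would verify the abstract hypotheses on $f$. One computes $f(0)=0$ and $f'(t)=(1-\gamma t)^{-2}-2$, so $f'(0)=-1$, giving {\bf h1}. Since $f''(t)=2\gamma(1-\gamma t)^{-3}>0$ on $[0,1/\gamma)$, $f'$ is strictly increasing, giving {\bf h2}. Because $f'''>0$, $f'$ is convex, which (as noted in Remark~2) implies that {\bf h3} holds with $p=1$ and hence delivers the quadratic rate claimed in the statement.

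The main content is the verification of the majorant inequality \eqref{Hyp:MH} from the Smale $\gamma$-condition. Since $F$ is analytic, for $\|x-x_*\|<1/\gamma$ and $\tau\in[0,1]$ one has the absolutely convergent expansion
$$F'(x)-F'(x_*+\tau(x-x_*))=\sum_{n\geq1}\frac{F^{(n+1)}(x_*)}{n!}(1-\tau^n)(x-x_*)^n.$$
Taking norms, multiplying by $\beta$, and using the consequence $\beta\|F^{(n+1)}(x_*)\|/n!\leq(n+1)\gamma^n$ of the definition of $\gamma$ yields
$$\beta\|F'(x)-F'(x_*+\tau(x-x_*))\|\leq\sum_{n\geq1}(n+1)\gamma^n(1-\tau^n)\|x-x_*\|^n.$$
On the other hand, the standard identity $1/(1-\gamma t)^2=\sum_{n\geq0}(n+1)(\gamma t)^n$ gives
$$f'(\|x-x_*\|)-f'(\tau\|x-x_*\|)=\sum_{n\geq1}(n+1)\gamma^n(1-\tau^n)\|x-x_*\|^n,$$
which is precisely the majorizing expression. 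Hence \eqref{Hyp:MH} holds.

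It then remains to identify the constants. A direct simplification of $n_f=t-f/f'$ gives $n_f(t)=\gamma t^2/(1-2(1-\gamma t)^2)$, so on $(0,\nu)$, where $\nu=(1-1/\sqrt2)/\gamma$ is the root of $f'$, one obtains $|n_f(t)|=\gamma t^2/(2(1-\gamma t)^2-1)$, which is the recursion prescribed for $\{t_k\}$ in the statement. Solving $|n_f(\rho)|/\rho=1$ reduces to the quadratic $2\gamma^2\rho^2-5\gamma\rho+1=0$, whose smaller root is $\rho=(5-\sqrt{17})/(4\gamma)$; by construction $f(\rho)/(\rho f'(\rho))-1=|n_f(\rho)|/\rho=1$, so Lemma~\ref{pr:best} supplies the optimality claim whenever $\rho<\kappa$. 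Sign analysis of $f(t)=t(2\gamma t-1)/(1-\gamma t)$ gives $\{f<0\}=(0,1/(2\gamma))$, delivering the uniqueness ball $B(x_*,1/(2\gamma))$. All remaining conclusions — well-definedness of $\{x_k\}$, its convergence to $x_*$, and the explicit error and rate estimates — then transport verbatim from Theorem~\ref{th:nt} using the above formula for $|n_f|$. The only substantive obstacle is the power-series comparison used to verify \eqref{Hyp:MH}; once that is in hand everything else is elementary calculus on an explicit rational function.
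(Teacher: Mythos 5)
Your proposal is correct and follows essentially the same route as the paper: the paper's proof consists of exhibiting the same majorant function $f(t)=t/(1-\gamma t)-2t$, noting that $f'$ is convex (so {\bf h3} holds with $p=1$), and deferring the verification of \eqref{Hyp:MH} and the constant computations to Theorem~20 of the cited reference. You have simply carried out in full the power-series verification and the identification of $\nu$, $\rho$, $\sigma$ and $n_f$ that the paper outsources, and these computations check out.
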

\begin{proof}
In this case, the  real function, $f:[0,1/\gamma) \to \mathbb{R}$,
defined by $ f(t)={t}/{(1-\gamma t)}-2t$, is a majorant function for
the function $F$ on $B(x_*, 1/\gamma)$. Hence, as  $f$ has a convex
derivative, the proof follows the same pattern as outlined in
Theorem~20 of \cite{MAX2}.
\end{proof}

\subsection{Convergence result  under a generalized Lipschitz condition}
In this section, we  present a local convergence theorem for the
Gauss-Newton method  under a generalized Lipschitz condition
according to X.Wang (see \cite{huangy2004,XW10}). It is worth to
point out that the result in this section does not assume that the
function defining the generalized Lipschitz condition  is
nondecreasing.

\begin{theorem} \label{th:XWT}
Let $\banacha$ and $\banachb$ be  Hilbert spaces,
 $\Omega\subseteq \banacha$ be an open set and
$F:{\Omega}\to \banachb$ be a continuously differentiable
function such that $F'$  has a closed image in $\Omega$. Let $x_* \in \Omega,$ $R>0$,
$
\beta:=\|F'(x_*)^{\dagger}\|$ and $ \kappa:=\sup \left\{ t\in [0, R): B(x_*, t)\subset\Omega \right\}.
$
Suppose that $F(x_*)=0$,
$F '(x_*)$ is injective and there exists a  positive  integrable function $L:[0,\; R)\to \mathbb{R}$ such that
\begin{equation}\label{Hyp:XW}
\beta\left\|F'(x)-F'(x_*+\tau(x-x_*))\right\| \leq  \int^{\|x-x_*\|}_{\tau\|x-x_*\|} L(u){\rm d}u,
\end{equation}
for all $\tau \in [0,1]$, $x\in B(x_*, \kappa)$. Let positive constants
$$
\bar{\nu}:=\sup \left\{t\in [0, R): \displaystyle \int_{0}^{t}L(u){\rm d}u-1 < 0\right\},
$$

$$
\bar{\rho}:=\sup \left\{t\in (0, \delta):
\displaystyle \int^{t}_{0}L(u)u {\rm d}u\Big{/}\left[t\left(1-\displaystyle  \int^{t}_{0}L(u){\rm d}u\right)\right]<1, \; t\in (0, \delta)\right\}, \qquad \bar{r}=\min \left\{\kappa, \bar{\rho}\right\}.
$$
Then, the sequences $\{x_k\}$ and $\{t_k\}$, with starting point $x_0\in B(x_*, \bar{r})/\{x_*\}$ and $t_0=\|x_0-x_*\|$, respectively, such that
$$
    x_{k+1} ={x_k}-F'(x_k) ^{\dagger}F(x_k), \qquad t_{k+1} =\displaystyle  \int^{t_k}_{0}L(u)u {\rm d}u\Big{/}\left(1-\displaystyle  \int^{t_k}_{0}L(u){\rm d}u\right),\qquad k=0,1,\ldots\,,
$$
are well defined, $\{t_k\}$ is strictly decreasing,  contained in $(0, \bar{r})$ and it converges to $0$. Furthermore, $\{x_k\}$ is contained in $B(x_*,\bar{r})$, it converges to  $x_*$, which is the unique zero of $F$ in $B(x_*, \bar{\sigma})$, where
$$
\bar{\sigma}:=\sup\left \{t\in(0, \kappa):  \int^{t}_{0}L(u)(t-u){\rm d}u- t< 0 \right\},
$$
and there  hold: $\lim_{k\to \infty}t_{k+1}/t_k=0$ and
$
\lim_{k\to \infty}[\|x_{k+1}-x_*\|/\|x_k-x_*\|]=0.
$
Moreover, if
$$
\displaystyle  \int^{\bar{\rho}}_{0}L(u)u {\rm d}u\Big{/}\left[\bar{\rho}\left(1-\displaystyle  \int^{\bar{\rho}}_{0}L(u){\rm d}u\right)\right]= 1,
$$
 and ${\bar \rho}<\kappa$, then $\bar{r}=\bar{\rho}$ is the best  possible convergence radius.

\noindent
If, additionally, given $0\leq p\leq1$
\begin{itemize}
  \item[{ ${\bf h)}$}]  the function   $(0,\, \nu) \ni t \mapsto t^{1-p}L(t)$ is nondecreasing,
\end{itemize}
  then the sequence $\{t_{k+1}/t_k^{p+1}\}$
  is strictly decreasing and we have:
$$
\|x_{k+1}-x_*\| \leq \big[t_{k+1}/t_k^{p+1}\big]\,\|x_k-x_*\|^{p+1}\leq \big[t_{1}/t_0^{p+1}\big]\,\|x_k-x_*\|^{p+1}, \qquad k=0,1,\ldots\,.
$$
  Consequently, for $k\geq0$,
$$ \|x_{k}-x_*\| \leq\left\{
                             \begin{array}{ll}
                               t_0[t_1/t_0]^k, & \hbox{if \quad p=0;} \\
                               t_0[t_1/t_0]^{((p+1)^k-1)/p}, & \hbox{if \quad p$\neq$0.}
                             \end{array}
                           \right.$$

\end{theorem}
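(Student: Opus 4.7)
The plan is to derive Theorem~\ref{th:XWT} as a corollary of Theorem~\ref{th:nt} by producing an explicit majorant function built from the integrable rate $L$. The natural choice is
$$f(t) := \int_{0}^{t} L(u)(t-u)\,du \;-\; t, \qquad t \in [0,R).$$
Differentiating under the integral sign gives $f'(t) = \int_{0}^{t} L(u)\,du - 1$, so $f(0)=0$ and $f'(0)=-1$; the positivity of $L$ forces $f'$ to be strictly increasing, so {\bf h1} and {\bf h2} hold. The majorant inequality \eqref{Hyp:MH} is then immediate from \eqref{Hyp:XW}, because its right-hand side equals $f'(\|x-x_*\|)-f'(\tau\|x-x_*\|)$.

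Next I would identify the auxiliary constants and sequences of Theorem~\ref{th:nt} with those in the statement. A short computation gives $f(t) - tf'(t) = -\int_{0}^{t} L(u)u\,du$, whence on $(0,\nu)$
$$|n_f(t)| \;=\; \frac{f(t)}{f'(t)} - t \;=\; \frac{\int_{0}^{t} L(u)u\,du}{1-\int_{0}^{t} L(u)\,du}.$$
Reading off the definitions yields $\nu=\bar\nu$, $\rho=\bar\rho$, $\sigma=\bar\sigma$, $r=\bar r$, and the scalar iteration $t_{k+1}=|n_f(t_k)|$ coincides with the one prescribed in the theorem. The optimality condition $f(\rho)/(\rho f'(\rho))-1=1$ also translates, via the same identity, to the displayed formula $\int_{0}^{\bar\rho} L(u)u\,du/[\bar\rho(1-\int_{0}^{\bar\rho} L(u)\,du)] = 1$. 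All conclusions of Theorem~\ref{th:nt} that do not involve {\bf h3} then transfer verbatim.

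The remaining step, which I expect to be the main technical obstacle, is to show that assumption {\bf h} implies {\bf h3} for the above~$f$. Setting
$$\phi(t) := \frac{|n_f(t)|}{t^{p+1}} \;=\; \frac{\int_{0}^{t} L(u)u\,du}{t^{p+1}\bigl(1 - \int_{0}^{t} L(u)\,du\bigr)},$$
one must establish $\phi'(t) > 0$ on $(0,\bar\nu)$. The monotonicity of $u\mapsto u^{1-p}L(u)$ gives $L(u)u \le t^{1-p}L(t)\, u^{p}$ for $0<u\le t$, which upon integration yields
$$(p+1)\int_{0}^{t} L(u)u\,du \;\le\; t^{2} L(t).$$
This is precisely the inequality that, combined with the strict positivity of $L$, makes the numerator of $\phi'(t)$ strictly positive after a short product/quotient-rule computation; care is needed here to recover strict (not merely non-strict) monotonicity, which is what justifies the strict decrease of $\{t_{k+1}/t_{k}^{p+1}\}$. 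Once {\bf h3} is available, the estimates \eqref{eq:q333} in Theorem~\ref{th:nt}, together with the explicit $(p+1)^k$ bounds on $\|x_k-x_*\|$, transfer directly and conclude the proof.
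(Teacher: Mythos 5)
Your proposal is correct and follows the same overall route as the paper: the same majorant function $\bar f(t)=\int_0^t L(u)(t-u)\,du-t$, the same translation of \eqref{Hyp:XW} into \eqref{Hyp:MH}, the same identification $\nu=\bar\nu$, $\rho=\bar\rho$, $\sigma=\bar\sigma$, $r=\bar r$ via the identity $f(t)/f'(t)-t=\int_0^t L(u)u\,du\big/\big(1-\int_0^t L(u)\,du\big)$, and then an appeal to Theorem~\ref{th:nt}. The only place you diverge is the verification of {\bf h3}. The paper factors $\frac{1}{t^{p+1}}\big[\bar f(t)/\bar f'(t)-t\big]$ as the product of $\frac{1}{t^{p+1}}\int_0^t L(u)u\,du$, which is nondecreasing by Lemma~2.2 of Wang--Li (cited, not reproved), and the strictly increasing positive factor $1/|\bar f'(t)|$; a nondecreasing positive function times a strictly increasing positive function is strictly increasing, which gives {\bf h3} with no differentiation at all. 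You instead differentiate the whole quotient and use the integrated inequality $(p+1)\int_0^t L(u)u\,du\le t^2L(t)$, which is in effect a self-contained proof of that cited lemma --- a reasonable trade: you gain self-containedness but take on a technical burden the paper avoids, namely that $L$ is only integrable, so $\phi'$ exists only almost everywhere and the ``short product/quotient-rule computation'' must be run through absolute continuity of $t\mapsto\int_0^t L(u)u\,du$ (the conclusion still holds, since an absolutely continuous function with a.e.\ positive derivative is strictly increasing, and the paper's factorization sidesteps the issue entirely). Your remark that strictness needs care is well placed: strictness comes for free from the factor $1/|\bar f'|$ in the paper's argument, whereas in yours it comes from the strictly positive extra term $A(t)\,tL(t)$ in the numerator.
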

\begin{proof}
Let  ${\bar f}:[0, \kappa)\to \mathbb{R}$ be a differentiable function defined by
\begin{equation} \label{eq:wf}
{\bar f}(t)=\int_{0}^{t}L(u)(t-u){\rm d}u-t.
\end{equation}
Note  that the derivative of the function $f$ is given by
$$
 {\bar f}'(t)=\int_{0}^{t}L(u){\rm d}u-1.
$$
Since $L$ is integrable, ${\bar f}'$ is continuous (in fact ${\bar f}'$ is absolutely continuous). Hence, it is easy to see that \eqref{Hyp:XW} becomes
 \eqref{Hyp:MH} with $f'={\bar f}'$. Moreover, since $L$ is positive, the function $f={\bar f}$  satisfies
the conditions  {\bf h1} and  {\bf h2} in Theorem \ref{th:nt}. Direct algebraic manipulation yields
$$
\frac{1}{t^{p+1}} \left[\frac{{\bar f}(t)}{{\bar f}'(t)}-t\right]=\left[
\frac{1}{t^{p+1}}\displaystyle\int^{t}_{0}L(u)u{\rm d}u\right]
\frac{1}{|{\bar f}'(t)|}.
$$
If assumption {\bf  h} holds, then Lemma~$2.2$ of \cite{XW9} implies
that the first term on the right hand side of the above equation is
nondecreasing
  in $(0,\, \nu)$. Now,  since $1/|{\bar f}'|$ is  strictly  increasing in $(0,\, \nu)$, the above equation implies that {\bf  h3} in Theorem~\ref{th:nt},
  with $f={\bar f}$, also holds.
Therefore, the result  follows from Theorem~\ref{th:nt} with $f={\bar f}$, $\nu=\bar{\nu}$, $\rho=\bar{\rho}$, $r=\bar{r}$ and $\sigma=\bar{\sigma}$.
\end{proof}
\begin{remark}
If the positive integrable function $L:[0,\; R)\to \mathbb{R}$ is  nondecreasing, then  the strictly increasing function $f':[0,\; R)\to \mathbb{R}$, defined by
$$
 f'(t)=\int_{0}^{t}L(u){\rm d}u-1,
$$
is   convex. Hence,  the sequence generated by the Gauss-Newton method
converges with quadratic rate, see for example Corollary~8  of
\cite{MAX2}. Moreover, in this case it is not hard to prove that the
inequalities \eqref{Hyp:MH} and \eqref{Hyp:XW} are equivalent.
However,  if $f'$ is strictly increasing and not necessarily convex,
the inequalities  \eqref{Hyp:MH} and \eqref{Hyp:XW} are not
equivalent, because there exist continuous and strictly increasing functions with derivative zero almost everywhere. These functions
are not absolutely continuous, i.e.,
 they cannot be  represented by an integral, see examples in  \cite{OW07,T78}.
\end{remark}
\section{Final remarks } \label{fr}

The inexact Gauss-Newton like methods for solving  \eqref{eq:11} are described as follows: Given an initial point $x_0 \in {\Omega}$, define
$$
x_{k+1}={x_k}+S_k, \qquad B(x_k)S_k=-F'(x_k)^*F(x_{k})+r_{k}, \qquad
k=0,1,\ldots,
$$
where $B(x_k)$ is a suitable invertible approximation of the
derivative $F'(x_k)^*F'(x_{k})$,  the residual tolerance, $r_k$, and
the preconditioning invertible matrix, $P_{k}$,   are such that
$$
\|P_{k}r_{k}\|\leq \theta_{k}\|P_{k}F'(x_k)^*F(x_{k})\|,
$$
for a suitable forcing number $\theta_{k}$. It would be interesting to
study  this class of  methods under a majorant condition, without the
convexity assumption on the derivative of the majorant function.
This analysis will be carried out in the future.

\bibliographystyle{abbrv}


\end{document}